\renewcommand{\@seccntformat}[1]{\csname the#1\endcsname.}
\begin{document}
%%%%%%%%%%%%% begin theorem definition %%%%%%%%%%%%%%%%%%
\newtheoremstyle{mytheorem}
  {\topsep}   % ABOVESPACE
  {\topsep}   % BELOWSPACE
  {\itshape}  % BODYFONT
  {}       % INDENT (empty value is the same as 0pt)
  {\bfseries} % HEADFONT
  {. }         % HEADPUNCT
  {5pt plus 1pt minus 1pt} % HEADSPACE
  { }          % CUSTOM-HEAD-SPEC
\newtheoremstyle{myremark}
  {\topsep}   % ABOVESPACE
  {\topsep}   % BELOWSPACE
  {\upshape}  % BODYFONT
  {}       % INDENT (empty value is the same as 0pt)
  {\bfseries} % HEADFONT
  {. }         % HEADPUNCT
  {5pt plus 0pt minus 1pt} % HEADSPACE
  {}          % CUSTOM-HEAD-SPEC\cite{}
\theoremstyle{mytheorem}
\newtheorem*{A}{Heyde theorem}
\newtheorem{theorem}{Theorem}[section]
 \newtheorem{theorema}{Theorem}
 \renewcommand{\thetheorema}{\Alph{theorema}}
 \newtheorem{proposition}[theorem]{Proposition}
 \newtheorem{lemma}[theorem]{Lemma}
\newtheorem{corollary}[theorem]{Corollary}
\newtheorem{definition}[theorem]{Definition}
\theoremstyle{myremark}
\newtheorem{remark}[theorem]{Remark}
\newtheorem{problem}[theorem]{Problem}
%%%%%%%%%%%%%%%%%%%%% end theorem definition %%%%%%%%%%%%%%%%%%
\noindent This work was accepted for publication \\
in the journal "Results in Mathematics"

\vskip 1 cm

\noindent{\Large\textbf{An Analogue of   Heyde's  Theorem for a Certain Class of Compact}}

\medskip

\noindent{\Large\textbf{Totally Disconnected Abelian Groups and $p$-quasicyclic Groups}}

\bigskip

\noindent{Gennadiy Feldman}  

\bigskip

\noindent B. Verkin Institute for Low Temperature Physics and Engineering\\
of the National Academy of Sciences of Ukraine, Kharkiv, Ukraine

\medskip

\noindent e-mail:    feldman@ilt.kharkov.ua

\medskip

\noindent ORCID ID  https://orcid.org/0000-0001-5163-4079 

\bigskip

\noindent\textbf{Abstract.} {According to the well-known   Heyde theorem, the  
Gaussian distribution  on the real line is characterized by the symmetry of 
the conditional  distribution of one linear form of independent random 
variables given another. In the article, we study an analogue  of this 
theorem for two
independent random variables  taking values either in a 
compact totally disconnected Abelian group of a certain class, which includes
finite cyclic groups and groups of $p$-adic integers, or in a $p$-quasicyclic group. 
In contrast to previous works devoted to group analogues of Heyde's theorem, 
we do not impose any restrictions on either coefficients of linear forms 
(they can be arbitrary topological automorphisms of the group) or the characteristic 
functions of random variables.
For the proof  we use methods of abstract harmonic analysis.}

\bigskip
\noindent \textbf{Mathematics Subject Classification.}   43A25,  43A35, 60B15, 62E10 

\bigskip

\noindent\textbf{Keywords.} Heyde  theorem;   
  automorphism; compact totally disconnected Abelian group; $p$-quasicyclic group;  
  group of $p$-adic integers

\section { Introduction}

According to the well-known   Heyde theorem,  the   Gaussian distribution  
on the real line is characterized by the symmetry of the conditional  
distribution of one linear form of independent random variables given another
\footnote{Let $\xi$ and $\eta$ be random variables.    
The conditional  distribution of  $\eta$ given $\xi$ is symmetric
if and only if the random vectors $(\xi, \eta)$ and $(\xi, -\eta)$ are 
identically distributed.} 
 (\!\!\cite{He}, see also \cite[Theorem 13.4.1]{KaLiRa}).   
Many studies have been devoted to analogues of Heyde's theorem for different 
classes of locally compact Abelian groups (see, e.g., \cite{Fe2,  Fe4, 
Fe20bb, My2,   FeTVP1, FeTVP, {M2013}, {M2020}, F_solenoid,   {Rima}, {M2023}, {Rima23}} 
and also
\cite[Chapter IV]{Febooknew}, where one can find additional references). 
Furthermore, special attention was paid to the case of two independent 
random variables.

Let us note the following. Suppose that $\xi_1$ and $\xi_2$ are
independent random variables with values in  a locally compact Abelian 
group $X$ and distributions $\mu_1$ and $\mu_2$.  Let $\alpha_j, \beta_j$ 
be topological automorphisms of $X$.   Assume that  the conditional  
distribution of the linear form $L_2 = \beta_1\xi_1 + \beta_2\xi_2$ 
given $L_1 = \alpha_1\xi_1 + \alpha_2\xi_2$ is symmetric. If we are 
interested in describing of 
the distributions $\mu_j$, then we can suppose, 
without loss of generality, that $L_1 = \xi_1 + \xi_2$ and 
$L_2 = \xi_1 + \alpha\xi_2$, where $\alpha$ is a topological 
automorphism of $X$. Taking into account this remark, for two independent 
random variables Heyde's theorem  can be 
 formulated as follows. 
\begin{A}Let $\xi_1$ and $\xi_2$ be
independent real-valued random variables with distributions
$\mu_1$ and $\mu_2$. Let $\alpha$ be a nonzero real number. 
Assume that the conditional  distribution of 
the linear form $L_2 = \xi_1 + \alpha\xi_2$ given $L_1 = \xi_1 + \xi_2$ 
is symmetric. Then the following statements hold:
\renewcommand{\labelenumi}{\rm(\roman{enumi})}
\begin{enumerate}
\item
if $\alpha\ne-1$,  then $\mu_1$ and $\mu_2$ are Gaussian distributions;
\item	
if $\alpha=-1$,  then $\mu_1=\mu_2=\mu$, where $\mu$ is an arbitrary distribution.
\end{enumerate} 
\end{A}

The simplest class of locally compact Abelian groups, 
where one can study characterization problems,
is the class of finite Abelian groups. 
Two different analogues of Heyde's theorem are proved
for finite Abelian groups  $X$ containing no elements of order 2. 
Let $\xi_1$ and $\xi_2$ be
independent random variables with values in  $X$ and distributions $\mu_1$ and $\mu_2$. 
Let
$\alpha$ be an automorphism of $X$. Consider two liner forms $L_1 = \xi_1 + \xi_2$ and 
$L_2 = \xi_1 + \alpha\xi_2$. Assume that  the conditional  distribution of  $L_2$ 
given $L_1$ is symmetric.
The first theorem states that if
$\mathrm{Ker}(I+\alpha)=\{0\}$, then $\mu_j$ are shifts of the Haar 
distribution on a subgroup of  $X$ (\!\!\cite[Theorem 1]{Fe2}, see 
also \cite[Theorem 10.2]{Febooknew}). According to the second theorem, 
if we do not impose any restrictions on the automorphism 
$\alpha$, but require that the characteristic functions of the 
distributions  
$\mu_j$ do not vanish, then $\mu_j$ are shifts of a distribution
supported in $\mathrm{Ker}(I+\alpha)$  (\!\!\cite[Theorem 2.1]{Rima}, 
see also \cite[Theorem 9.11]{Febooknew}). 

The main goal of this article is to prove an analogue of Heyde's theorem for compact
totally disconnected Abelian groups of a certain class, where
 in contrast to previous works, we do not impose any restrictions 
on $\alpha$ or the characteristic functions of $\mu_j$  (Theorem \ref{nth1}).
The theorems mentioned above for cyclic groups of odd order are 
consequences of Theorem \ref{nth1}. 
Moreover, an analogue of Heyde's theorem for groups of $p$-adic integers, where $p\ne 2$,
also follows from Theorem \ref{nth1}.  
It is important to note that 
a new class of distributions which has not previously 
been encountered in characterization problems on groups, appears in Theorem \ref{nth1}.
Then, based on Theorem \ref{nth1}, we prove an 
analogue of Heyde's 
theorem for discrete $p$-quasicyclic groups, where $p\ne 2$ (Theorem \ref{nth2}), 
also without any restrictions 
on $\alpha$ or the characteristic functions of $\mu_j$.   

In the article we use standard results of abstract harmonic analysis 
 (see e.g. \cite{Hewitt-Ross}). Let $X$ be a locally compact Abelian group. Denote by $Y$ the character
group of the group $X$. For  $x \in X$, denote by  $(x,y)$ the value of 
a character $y \in Y$ at the element  $x$. For a  subgroup $K$ of the group $X$,  denote by
 $A(Y, K) = \{y \in Y: (x, y) =1$  for all  $x\in
K\}$
its annihilator.

 Denote by $\mathrm{Aut}(X)$ the group
of all topological automorphisms of $X$  and by  $I$ the identity automorphism of a group.   
Let $G$ be a closed subgroup of $X$ and let $\alpha\in\mathrm{Aut}(X)$.
If $\alpha(G)=G$, i.e., the restriction  of  $\alpha$ to $G$ is a topological automorphism of the group   $G$, then we denote by   $\alpha_{G}$ this restriction.
A closed subgroup $G$ of $X$ is called characteristic if $\alpha(G)=G$ for 
all $\alpha\in\mathrm{Aut}(X)$. 
Let
 $\alpha:X\rightarrow X$ be a continuous endomorphism of the group $X$. 
 The adjoint endomorphism $\widetilde\alpha: Y\rightarrow Y$
is defined by the formula $(\alpha x,
y)=(x, \widetilde\alpha y)$ for all $x\in X$, $y\in
Y$.  Note that $\alpha\in\mathrm{Aut}(X)$ if and only 
if $\widetilde\alpha\in\mathrm{Aut}(Y)$. Let $p$ be a prime number. 
A group  $X$ is called  $p$-{group}  
if the order of every 
element of $X$ is a power of $p$. 
For a natural $n$, 
denote by $f_n:X \rightarrow X$ 
an  endomorphism of the group $X$
 defined by the formula  $f_nx=nx$, $x\in X$. Put $X^{(n)}=nX=f_n(X)$.   
Let $x\in X$ be an elements of finite 
order. Denote by $\langle x\rangle$ the subgroup of $X$ generated by $x$. 
Denote by $\mathbb{R}$ the group of real numbers and by
 $\mathbb{Z}(n)$ the  group of the integers 
modulo $n$, i.e., the finite cyclic group of order $n$.

Let $\mu$ and $\nu$ be probability 
distributions on the group   $X$. The convolution
$\mu*\nu$ is defined by the formula
$$
\mu*\nu(B)=\int\limits_{X}\mu(B-x)d \nu(x)
$$
for any Borel subset $B$ of $X$. 

Denote by
$$
\hat\mu(y) =
\int\limits_{X}(x, y)d \mu(x), \quad y\in Y,$$   the characteristic function 
(Fourier transform) of 
the distribution $\mu$.
 
 Define the distribution 
 $\bar \mu $ by the formula
 $\bar \mu(B) = \mu(-B)$ for any Borel  subset $B$ of $X$.
Then $\hat{\bar{\mu}}(y)=\overline{\hat\mu(y)}$. A distribution   
$\mu_1$ on the group $X$ is called a {factor} of $\mu$  if there is a distribution   
$\mu_2$ on $X$ such that $\mu=\mu_1*\mu_2$.
 
 We say that a function $a(y)$ on the group
 $Y$ is a characteristic function if there is a distribution $\mu$ 
 on the group $X$ such that
 $a(y)=\hat\mu(y)$ for all $y\in Y$. 
 Let $K$ be 
a compact subgroup of $X$. Denote by $m_K$ the Haar distribution 
on $K$. 
The
characteristic function  $\widehat m_K(y)$ is of the form
\begin{equation}
\label{fe22.1}\widehat m_K(y)=
\begin{cases}
1 & \text{\ if\ }\ \ y\in A(Y,K),
\\ 0 & \text{\ if\ }\ \ y\not\in
A(Y,K).
\end{cases}
\end{equation}

\section{ Main theorem}

Let $p$ be a prime number.
Recall the definitions of the group of $p$-adic integers $\mathbb{Z}_p$
and the $p$-quasicyclic group $\mathbb{Z}(p^\infty)$.
Consider a set of sequences of the form 
$x=(x_0, x_1, \dots, x_n, \dots)$, where $x_n\in\{0, 1, \dots, p-1\}$, 
and denote this set by $\mathbb{Z}_p$. The set $\mathbb{Z}_p$ is 
endowed with the 
product topology.
Each element
$x=(x_0, x_1, \dots, x_n, \dots)\in \mathbb{Z}_p$ is thought of as a
formal power series $\mathop{\sum}\limits_{n=0}^\infty x_np^n$.
The addition of formal power series defining in the usual way
corresponds to  the addition of the corresponding sequences. One
can define the multiplication in $\mathbb{Z}_p$ in the natural way as
the multiplication of formal power series. Then   $\mathbb{Z}_p$ is
transformed into a commutative ring and is called the ring of $p$-adic integers.
The element $e=(1, 0, \dots, 0,\dots)$ 
is the unit of the ring $\mathbb{Z}_p$. 
The additive group of the ring
 $\mathbb{Z}_p$ is called the group of $p$-adic integers and is also denoted as 
 $\mathbb{Z}_p$. Any nonzero closed subgroup $S$ of the group $\mathbb{Z}_p$ 
 is of the 
 form $S=p^k\mathbb{Z}_p$ for some nonnegative integer $k$.
 The family of the subgroups
 $\{p^k\mathbb{Z}_p\}_{k=0}^{\infty}$ forms an open 
 basis at the zero of
 the group $\mathbb{Z}_p$. The group $\mathbb{Z}_p$ is compact and totally 
 disconnected.
Denote by $\mathbb{Z}_p^\times$ 
the multiplicative group of all
invertible elements of the ring $\mathbb{Z}_p$. Then  
$\mathbb{Z}_p^\times=\{ c   =(c_0, c_1,\dots, c_n,   \dots)\in \mathbb{Z}_p:
c_0\ne 0\}$. Each element  $x \in
\mathbb{Z}_p$ is represented in the form $x = p^k c$, 
where $k$ is a nonnegative integer and
 $c \in \mathbb{Z}_p^\times$.  

Consider a set of rational numbers of the form $\{{k / p^n} : k=0, 1, \dots,p^n-1,\
	n=0,1,\dots\}$ and denote  by ${\mathbb Z}(p^\infty)$ this set. If
we define the operation in ${\mathbb Z}(p^\infty)$ as addition
modulo 1, then ${\mathbb Z}(p^\infty)$ is transformed into an
Abelian group. The group ${\mathbb Z}(p^\infty)$ is endowed with  
the discrete topology.
This group is called  the $p$-quasicyclic  group.
For a fixed $n$ consider a subgroup
of ${\mathbb Z}(p^\infty)$ consisting of all elements of the form
$\{{k / p^n} : k=0, 1, \dots,p^n-1\}$. This subgroup is isomorphic to 
the group ${\mathbb Z}(p^n)$.  To avoid introducing new notation, 
we will denote this subgroup by ${\mathbb Z}(p^n)$. Any 
proper subgroup of the group
${\mathbb Z}(p^\infty)$ coincides with ${\mathbb Z}(p^n)$ 
for some nonnegative integer $n$.
 The group  ${\mathbb Z}(p^\infty)$ is isomorphic to the multiplicative group
of  $p^n$th roots of unity, where $n$ goes through the nonnegative
integers, which is endowed with  the discrete topology. 

The character group of the group $\mathbb{Z}_p$ is topologically 
 isomorphic
 to the group  ${\mathbb Z}(p^\infty)$  and the value of a character 
 ${y=k / p^n\in	{\mathbb Z}(p^\infty)}$
  at an element $x=(x_0, x_1,\dots, x_n,\dots )\in \mathbb{Z}_p$
   is given by the formula
$$(x, y)=\exp\displaystyle{\left\{{\Bigl(x_0+x_1 p + \dots
	+x_{n-1}p^{n-1}\Bigr)\frac{2\pi ik}{p^n}}\right\}}.$$ 

The  groups $\mathrm{Aut}({\mathbb{Z}_p})$ and $\mathrm{Aut}({\mathbb Z}(p^\infty))$ 
 are isomorphic to $\mathbb{Z}_p^\times$. Let $\alpha\in\mathrm{Aut}({\mathbb{Z}_p})$. 
Then $\alpha$ corresponds to an element 
$c=(c_0, c_1, \dots, c_n,\dots) \in\mathbb{Z}_p^\times$
such that $\alpha$ acts on the group ${\mathbb{Z}_p}$ 
as the multiplication by $c$, i.e., 
  $\alpha x=cx$ for all $x\in \mathbb{Z}_p$.
The adjoint  automorphism
 $\widetilde\alpha\in \mathrm{Aut}({\mathbb Z}(p^\infty))$
  acts on ${\mathbb Z}(p^\infty)$     
as follows. Put
$s_n=c_0+c_1p+
c_2p^2+\dots+c_{n-1}p^{n-1}$. The restriction of 
 $\widetilde\alpha$ to the subgroup ${\mathbb{Z}}(p^n) \subset
{\mathbb{Z}}(p^\infty)$ is of the form $\widetilde\alpha y = s_ny$ for all $y \in
\mathbb{Z}(p^n)$, i.e., $\alpha$ acts on ${\mathbb{Z}}(p^n)$ as
the multiplication by $s_n$.  

  If $\alpha$ corresponds
to an element $c=(c_0, c_1, \dots, c_n,\dots) \in\mathbb{Z}_p^\times$, we will write
$\alpha=(c_0, c_1, \dots, c_n,\dots)$.
  We note that any closed subgroup of the group $\mathbb{Z}_p$
 is characteristic. The same is true for any  
 subgroup of the group  ${\mathbb Z}(p^\infty)$.
 
 Let $\{G_\iota: \iota\in {\mathcal I}\}$ be a nonvoid family of
compact Abelian groups. Denote by $\mathop\mathbf{P}\limits_{\iota \in
	{\mathcal I}}G_\iota$  the {direct product of the groups}
$G_\iota$ considering in the product topology. 
Let $\{H_\iota: \iota\in {\mathcal I}\}$ be a nonvoid family of
discrete Abelian groups. Denote by
$\mathop{\mathbf{P}^*}\limits_{\iota \in {\mathcal
		I}}H_\iota$ the {weak direct product of the
	groups} $H_\iota$, considering in the discrete topology.
	
	We begin by proving an analogue of Heyde's theorem for a certain class of compact
totally disconnected Abelian groups. This class includes, in particular,  
finite cyclic groups of odd order and groups of $p$-adic integers, where $p\ne 2$.

\begin{theorem}\label{nth1}  Let $X$ be a compact
totally disconnected Abelian group of the form
\begin{equation}\label{e20.35}
X=\mathop\mathbf{P}\limits_{p_j\in {\mathcal P}}{X}_{p_j},
\end{equation}
where ${\mathcal P}$ is a set of pairwise distinct prime numbers such that 
$2\notin {\mathcal P}$ and $X_{p_j}$ is either the cyclic $p_j$-group 
$\mathbb{Z}(p_j^{k_j})$ or the group of $p_j$-adic integers 
$\mathbb{Z}_{p_j}$.
Let  $\alpha$ be a topological automorphism of the group $X$. 
 Let $\xi_1$ and $\xi_2$ be
independent random variables with values in   $X$ and distributions
$\mu_1$ and $\mu_2$. Assume that the conditional  distribution of 
the linear form $L_2 = \xi_1 + \alpha\xi_2$ given $L_1 = \xi_1 + \xi_2$ 
is symmetric.
Then  there is a compact subgroup $G$ of the group $X$ satisfying the condition 
$(I-\alpha)(G)=G$ 
and a distribution  
$\lambda$ supported in $G$ such that 
the following statements are true:
\renewcommand{\labelenumi}{\rm(\roman{enumi})}
\begin{enumerate}
\item
$\mu_j$ are shifts of $\lambda$;
	\item
$G$ is the minimal subgroup containing the support of 
$\lambda$;
\item
the Haar distribution $m_{(I+\alpha)(G)}$ is a factor of  $\lambda$;
\item
if   $\eta_j$
are independent identically distributed random variables with values in
$G$  and distribution  $\lambda$, then the conditional 
distribution of the linear form $M_2=\eta_1 + \alpha_G\eta_2$ 
given $M_1=\eta_1 + \eta_2$  is symmetric.
\end{enumerate} 
\end{theorem}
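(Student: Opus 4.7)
The plan is to translate the symmetry of the conditional distribution into a functional equation for the characteristic functions $\hat\mu_j$, and then exploit the primary decomposition of the character group $Y$ together with the fact that every $p_j\in\mathcal P$ is odd. By a standard equivalence (Kagan--Linnik--Rao), the symmetry hypothesis is equivalent to the functional equation
\begin{equation*}
\hat\mu_1(u+v)\,\hat\mu_2(u+\widetilde\alpha v)=\hat\mu_1(u-v)\,\hat\mu_2(u-\widetilde\alpha v),\qquad u,v\in Y.
\end{equation*}
Passing to the symmetrizations $\nu_j:=\mu_j*\bar\mu_j$, whose characteristic functions $\hat\nu_j=|\hat\mu_j|^2$ are real and nonnegative, the same functional equation holds for the $\hat\nu_j$. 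This lets me first determine the subgroup structure of $X$ from the moduli $|\hat\mu_j|$, and afterwards use the original equation with the complex-valued $\hat\mu_j$ to pin down the shifts that identify $\mu_1$ and $\mu_2$ with a common distribution $\lambda$.

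Next I would use the decomposition $Y=\mathop{\mathbf{P}^*}\limits_{p_j\in\mathcal P}Y_{p_j}$ of the character group into its primary components, with $Y_{p_j}\in\{\mathbb Z(p_j^{k_j}),\mathbb Z(p_j^\infty)\}$. Each $Y_{p_j}$ is characteristic in $Y$, so $\widetilde\alpha(Y_{p_j})=Y_{p_j}$, and $\widetilde\alpha$ acts on $Y_{p_j}$ as multiplication by some $c^{(j)}\in\mathbb Z_{p_j}^\times$. Since each $p_j$ is odd, $f_2$ is a topological automorphism of both $X$ and $Y$, so the substitutions $u=v$, $u=-v$, $u=\widetilde\alpha v$, and their combinations in the functional equation become fully effective; in particular the identity $(I+\widetilde\alpha)v+(I-\widetilde\alpha)v=2v$ is invertible in $v$ on each primary component. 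Working component-by-component, I expect to show that $B:=\{y\in Y:\hat\nu_1(y)=\hat\nu_2(y)=1\}$ is a subgroup of $Y$ equal to the annihilator $A(Y,G)$ of a compact subgroup $G$ of $X$, and that $(I-\alpha)(G)=G$ follows automatically from the invariance of $A(Y,G)$ under $I\pm\widetilde\alpha$. After choosing shifts $x_j$ so that $\mu_j=\lambda*\delta_{x_j}$ for a single distribution $\lambda$ concentrated on $G$, (i) and (ii) are immediate, and (iv) follows by restricting the functional equation to the character group of $G$ via $\alpha(G)=G$. For (iii), I would revisit the functional equation with complex $\hat\mu_j$ to conclude that $\hat\lambda$ vanishes outside $A(Y,(I+\alpha)(G))$, which by (\ref{fe22.1}) exhibits $m_{(I+\alpha)(G)}$ as a factor of $\lambda$.

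The main obstacle is the component-by-component analysis when neither $I+\widetilde\alpha_{p_j}$ nor $I-\widetilde\alpha_{p_j}$ is an automorphism of $Y_{p_j}$; this is precisely the regime in which the new class of distributions advertised in the introduction arises and the results previously known for cyclic groups break down. Correctly locating $G_{p_j}$, verifying $(I-\alpha_{p_j})(G_{p_j})=G_{p_j}$, and identifying $(I+\alpha_{p_j})(G_{p_j})$ as the proper subgroup whose Haar measure factors off $\lambda$ will require patient computation on each $Y_{p_j}\in\{\mathbb Z(p_j^{k_j}),\mathbb Z(p_j^\infty)\}$, exploiting the explicit form of $\widetilde\alpha_{p_j}$ as multiplication by $c^{(j)}\in\mathbb Z_{p_j}^\times$.
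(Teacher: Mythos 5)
Your setup matches the paper's: the reduction to Heyde's functional equation, the symmetrization $\nu_j=\mu_j*\bar\mu_j$, the subgroup $S=\{y:|\hat\mu_1(y)|=|\hat\mu_2(y)|=1\}$ with $S^{(2)}=S$ (since $f_2\in\mathrm{Aut}(Y)$), and the passage to shifts supported in $G=A(X,S)$ are all exactly how the paper begins. But from that point on the central assertions of the theorem are stated rather than proved, and the one idea that actually drives the paper's proof is absent. First, the reduction lemma only yields that $\mu_j$ are shifts of two a priori different distributions $\lambda_1,\lambda_2$ supported in $G$; your ``choosing shifts $x_j$ so that $\mu_j=\lambda*\delta_{x_j}$ for a single distribution $\lambda$'' is precisely the hardest conclusion of the theorem, not a step you may assume. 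Second, $(I-\alpha)(G)=G$ is not ``automatic from invariance'': it is obtained by restricting the functional equation to $T=\mathrm{Ker}(I-\widetilde\alpha_G)$, substituting $u=v$, using $T^{(2)}=T$ to get $|\hat\lambda_j|=1$ on $T$, and contradicting the minimality of $G$ unless $T=\{0\}$.

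The missing mechanism is the iteration argument of the paper's Lemma~2.4. Writing $f=\hat\nu_1$, $g=\hat\nu_2$ on $H$ (the character group of $G$) and $\beta=\widetilde\alpha_G$, the substitutions $u=\beta y,\,v=y$ and $u=v=y$ give
$f(y)\le g(-2\beta(I-\beta)^{-1}y)\le f(\kappa y)$ with $\kappa=-f_4\beta(I-\beta)^{-2}\in\mathrm{Aut}(H)$; since $H$ is a torsion group in which every subgroup is characteristic, $\kappa$ has finite order on each $\langle h\rangle$, so the chain of inequalities closes into equalities. This yields $f\bigl(-(I+\beta)(I-\beta)^{-1}h\bigr)=1$ whenever $f(h)\ne 0$, and the triviality of $\{h:f(h)=g(h)=1\}$ then forces $h\in\mathrm{Ker}(I+\widetilde\alpha_G)$. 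That is what localizes $\hat\lambda_j$ on $A(H,(I+\alpha_G)(G))$ (giving (iii)), and only after that can one restrict the original complex-valued equation to $\mathrm{Ker}(I+\widetilde\alpha_G)$, where $\widetilde\alpha_G=-I$, put $u=v$, and use the absence of $2$-torsion to conclude $\lambda_1=\lambda_2$. Your proposed substitute --- ``patient computation component by component'' on the primary decomposition --- cannot replace this: the characteristic function of a distribution on the product does not factor over the primary components, so the support of $\hat\lambda$ cannot be located one $Y_{p_j}$ at a time, and without the monotonicity-plus-periodicity argument you have no handle on the regime you yourself flag, where neither $I+\widetilde\alpha$ nor $I-\widetilde\alpha$ is invertible.
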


To prove Theorem \ref{nth1} we need the following lemmas.
\begin{lemma}[{\!\!\protect\cite[Lemma 9.1]{Febooknew}}]
\label{lem1}
 Let $X$ be a second countable locally compact Abelian group with character group
 $Y$ and let $\alpha$ be a topological automorphism of $X$.
Let
$\xi_1$ and  $\xi_2$  be independent random variables with values in
 the group $X$  and distributions $\mu_1$ and $\mu_2$.   The conditional 
 distribution of the linear form $L_2 = \xi_1 + \alpha\xi_2$ given 
 $L_1 = \xi_1 + \xi_2$ is symmetric if and only
 if the characteristic functions
 $\hat\mu_j(y)$ satisfy the equation
\begin{equation}\label{11.04.1}
\hat\mu_1(u+v )\hat\mu_2(u+\widetilde\alpha v )=
\hat\mu_1(u-v )\hat\mu_2(u-\widetilde\alpha v), \quad u, v \in Y.
\end{equation}
\end{lemma}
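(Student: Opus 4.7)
The plan is to reduce the statement to a straightforward computation of characteristic functions on the product group $X\times X$, using the fact that on a second countable locally compact Abelian group a distribution is uniquely determined by its characteristic function. By the footnote definition, the hypothesis that the conditional distribution of $L_2$ given $L_1$ is symmetric is equivalent to the identity in distribution
\[
(L_1,L_2)\stackrel{d}{=}(L_1,-L_2).
\]
Since $X$ is second countable, $X\times X$ is too, and the uniqueness theorem for the Fourier transform on a locally compact Abelian group gives that this equality in distribution is equivalent to equality of the characteristic functions of the two $X\times X$-valued vectors as functions on the dual $Y\times Y$.

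The next step is to compute those two characteristic functions. Using that characters of $X\times X$ are pairs $(u,v)\in Y\times Y$ acting by $((x_1,x_2),(u,v))=(x_1,u)(x_2,v)$, that $\xi_1$ and $\xi_2$ are independent, that $(\alpha x,v)=(x,\widetilde\alpha v)$ by the definition of the adjoint, and that characters are homomorphisms, I would expand
\[
E\bigl[(L_1,u)(L_2,v)\bigr]=E\bigl[(\xi_1,u+v)\bigr]\,E\bigl[(\xi_2,u+\widetilde\alpha v)\bigr]=\hat\mu_1(u+v)\hat\mu_2(u+\widetilde\alpha v),
\]
and similarly the characteristic function of $(L_1,-L_2)$ at $(u,v)$ equals $\hat\mu_1(u-v)\hat\mu_2(u-\widetilde\alpha v)$, because replacing $L_2$ by $-L_2$ amounts to replacing $v$ by $-v$ in its character argument.

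Combining the two computations, equality of the characteristic functions of $(L_1,L_2)$ and $(L_1,-L_2)$ on all of $Y\times Y$ is exactly the asserted equation (\ref{11.04.1}), which gives both implications simultaneously. The only non-computational ingredient is the Fourier uniqueness theorem on $X\times X$, and the main thing to be careful about is the role of second countability, which is assumed in the hypothesis precisely so that this uniqueness (equivalently, regularity of the associated measures and separability of the relevant function spaces) is available; beyond that, the argument is a direct manipulation using independence, the homomorphism property of characters, and the definition of $\widetilde\alpha$, with no genuine obstacle.
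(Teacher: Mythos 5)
Your proposal is correct and follows essentially the same route as the proof the paper relies on (it cites \cite[Lemma 9.1]{Febooknew} rather than reproving it): translate symmetry of the conditional distribution into $(L_1,L_2)\stackrel{d}{=}(L_1,-L_2)$ via the footnote definition, invoke uniqueness of Fourier transforms of distributions on the second countable group $X\times X$, and compute both characteristic functions using independence, the homomorphism property of characters, and $(\alpha x,v)=(x,\widetilde\alpha v)$, which yields equation (\ref{11.04.1}) exactly. There are no gaps.
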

Equation (\ref{11.04.1}) is called Heyde's functional equation. 
Thanking Lemma \ref{lem1} the proof of Theorem \ref{nth1} is reduced 
to the description of all solutions of   functional equation (\ref{11.04.1}) 
on the character group of the group $X$ of the form (\ref{e20.35})
 in the class of characteristic functions.

It is convenient for us to formulate   the following   well-known statement  
in the form as a lemma (for the proof see, e.g., \cite[Proposition 2.10]{Febooknew}).
\begin{lemma}\label{lem2}  Let $X$ be a locally compact Abelian group  
with character group
 $Y$
and let $\mu$ be a distribution on $X$. The sets 
$$E=\{y\in Y:  \hat\mu(y)=1\}, \quad
B=\{y\in Y:  |\hat\mu(y)|=1\}$$ are closed subgroups of the group $Y$ and 
the distribution   $\mu$ is supported in  $A(X,E)$.
\end{lemma}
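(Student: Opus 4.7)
The plan is to prove the two assertions in sequence: that \(E\) and \(B\) are closed subgroups of \(Y\), and then that \(\mu\) is concentrated on the annihilator \(A(X,E)\).

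Closedness is immediate. Since \(\mu\) is a probability distribution, its characteristic function \(\hat\mu\) is continuous on \(Y\); the singleton \(\{1\}\subset\mathbb{C}\) and the unit circle \(\mathbb{T}\subset\mathbb{C}\) are closed, so \(E=\hat\mu^{-1}(\{1\})\) and \(B=\hat\mu^{-1}(\mathbb{T})\) are closed in \(Y\).

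The key technical ingredient is an equality case of the triangle inequality for integrals: if \(|\hat\mu(y)|=1\) for some \(y\in Y\), then there exists a constant \(c\in\mathbb{T}\), necessarily equal to \(\hat\mu(y)\), such that \((x,y)=c\) for \(\mu\)-almost every \(x\in X\). I would establish this by noting that, since \(|(x,y)|\equiv 1\),
\[
1=\left|\int_X(x,y)\,d\mu(x)\right|\le\int_X|(x,y)|\,d\mu(x)=1,
\]
and equality in the first inequality for a complex integrand of constant modulus one forces that integrand to be \(\mu\)-a.e.\ equal to a unimodular constant; integrating then identifies the constant with \(\hat\mu(y)\). This equality-case argument is the one non-formal step in the proof.

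Granted this ingredient, the subgroup property follows by a short manipulation. For \(y_1,y_2\in B\), write \(\hat\mu(y_j)=c_j\in\mathbb{T}\); then \((x,y_1+y_2)=(x,y_1)(x,y_2)=c_1c_2\) \(\mu\)-a.e., so \(\hat\mu(y_1+y_2)=c_1c_2\in\mathbb{T}\), and likewise \(\hat\mu(-y_1)=\overline{c_1}\in\mathbb{T}\); thus \(B\) is a subgroup of \(Y\). Specializing to \(c_1=c_2=1\) yields the analogous conclusion for \(E\). For the support claim, fix any \(y\in E\): the set \(F_y=\{x\in X:(x,y)=1\}\) is closed by continuity of the character and has full \(\mu\)-measure by the equality-case argument with \(c=1\); since the topological support of \(\mu\) is the smallest closed set of full \(\mu\)-measure, \(\mathrm{supp}(\mu)\subset F_y\). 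Intersecting over \(y\in E\) gives \(\mathrm{supp}(\mu)\subset\bigcap_{y\in E}F_y=A(X,E)\), which finishes the proof.
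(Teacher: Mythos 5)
Your proof is correct. Note that the paper itself does not prove this lemma: it is stated as a well-known fact with a pointer to \cite[Proposition 2.10]{Febooknew}, so there is no internal proof to compare against; your argument is essentially the standard one given in that reference. The one stylistic difference is that the usual textbook argument exploits continuity directly: from $\int_X \bigl(1-\mathrm{Re}\,(\overline{\hat\mu(y)}\,(x,y))\bigr)\,d\mu(x)=0$ with a nonnegative \emph{continuous} integrand one concludes $(x,y)=\hat\mu(y)$ for \emph{every} $x$ in the support of $\mu$, which yields the subgroup property and the support statement in one stroke; you instead get the identity $\mu$-a.e.\ and then pass to the support, correctly sidestepping the uncountable-intersection trap by intersecting closed sets that each contain $\mathrm{supp}(\mu)$ rather than intersecting full-measure sets. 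Both routes need the standing convention that distributions on a locally compact Abelian group are Radon (regular) measures, so that the support is well defined and carries full mass; this is the convention of the cited monograph, but it is worth making explicit since you invoke ``the smallest closed set of full $\mu$-measure.''
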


\begin{lemma}[{\!\!\protect\cite[Lemma 2.5]{Rima}, see also 
		\cite[Lemma 9.10]{Febooknew}}]
\label{lem11} 
Let $X$ be a second countable locally compact Abelian group with character group
 $Y$. Let $S$ be a closed subgroup of  $Y$ such that  $S^{(2)}=S$.   Put   $G=A(X, S)$.
 Let  $\alpha$ be a topological automorphism of the group   $X$ such that 
 $\alpha(G)=G$.
Let
$\xi_1$ and  $\xi_2$  be independent random variables with values in
 $X$  and distributions $\mu_1$ and $\mu_2$ such that
$$|\hat\mu_1(y)|=|\hat\mu_2(y)|=1, \quad y\in S.$$  
Assume that the conditional  distribution of the linear form $L_2 = \xi_1 + \alpha\xi_2$ given $L_1 = \xi_1 + \xi_2$ is symmetric. Then there are some shifts $\lambda_j$ of the distributions $\mu_j$  such that $\lambda_j$ are supported in $G$. In doing so, if  $\eta_j$ are independent random variables with values in   $G$  and distributions $\lambda_j$, then the conditional  distribution of the linear form
$M_2=\eta_1 + \alpha_G\eta_2$ given $M_1=\eta_1 + \eta_2$ is symmetric.
\end{lemma}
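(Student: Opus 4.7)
The plan is to pass to Heyde's functional equation through Lemma~\ref{lem1}, localize the supports of $\mu_1$ and $\mu_2$ inside cosets of $G$, and then choose the shifts so that the residual term in Heyde's equation vanishes. Since $|\hat\mu_j(y)|=1$ on $S$, the characteristic function of $\mu_j*\bar\mu_j$ equals $|\hat\mu_j|^2\equiv 1$ on $S$, so Lemma~\ref{lem2} places this convolution in $A(X,S)=G$. Hence $\mathrm{supp}(\mu_j)\subseteq x_j^0+G$ for any $x_j^0\in\mathrm{supp}(\mu_j)$, and evaluating at such a point gives $\hat\mu_j(y)=(x_j^0,y)$ for every $y\in S$. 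Annihilator duality, together with $\alpha(G)=G$, yields $\widetilde\alpha(S)=S$.

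Substituting $u,v\in S$ in equation~(\ref{11.04.1}) for the $\hat\mu_j$ and cancelling the common factors $(x_j^0,u)$ leads to $(2x_1^0+2\alpha x_2^0,v)=1$ for every $v\in S$; equivalently, the element $z:=x_1^0+\alpha x_2^0$ satisfies $2z\in G$. Here the hypothesis $S^{(2)}=S$ enters decisively: for any $y\in S$ we have $(z,2y)=(2z,y)=1$, so the character $(z,\cdot\,)$ is trivial on $2S=S$, which forces $z\in A(X,S)=G$. Now set $x_1:=x_1^0-z$, $x_2:=x_2^0$ and $\lambda_j:=\mu_j*\delta_{-x_j}$. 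Since $x_j\in x_j^0+G$, each $\lambda_j$ is supported in $G$, while by construction $x_1+\alpha x_2=0$, hence $2x_1+2\alpha x_2=0$. Substituting $\hat\lambda_j(y)=\hat\mu_j(y)\overline{(x_j,y)}$ into~(\ref{11.04.1}) and using the equation already satisfied by the $\hat\mu_j$ leaves only the multiplicative factor $\overline{(2x_1+2\alpha x_2,v)}$, which is identically $1$. Thus the $\hat\lambda_j$ satisfy Heyde's equation on $Y$; since $\lambda_j$ is supported in $G=A(X,S)$, its Fourier transform is $S$-invariant on $Y$ and descends to the character group $Y/S$ of $G$. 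Applying Lemma~\ref{lem1} on $G$ with the automorphism $\alpha_G$, whose adjoint is induced by $\widetilde\alpha$ on $Y/S$, then produces the required symmetry of the conditional distribution of $M_2$ given $M_1$.

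The main difficulty is the seemingly innocuous step $2z\in G\Rightarrow z\in G$, which is really a dual 2-torsion obstruction. Without $S^{(2)}=S$ there could be elements $x\in X\setminus G$ with $2x\in G$, and one would only be able to absorb the residue $2x_1^0+2\alpha x_2^0$ into $2G+2\alpha G=2G$, which in general is strictly smaller than $G$. The hypothesis $S^{(2)}=S$ is exactly what makes this obstruction disappear through the short pairing argument above, allowing the shifts inside $x_j^0+G$ to be chosen so that Heyde's equation is preserved.
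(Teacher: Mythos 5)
The paper does not actually prove Lemma \ref{lem11}; it imports it from \cite[Lemma 2.5]{Rima} (see also \cite[Lemma 9.10]{Febooknew}), and your argument is a correct reconstruction along essentially the same standard lines used there: reduction to Heyde's functional equation via Lemma \ref{lem1}, localization of each $\mu_j$ in a coset of $G$ via $\mu_j*\bar\mu_j$ and Lemma \ref{lem2}, the pairing argument that uses $S^{(2)}=S$ to upgrade $2z\in G$ to $z\in G$, and the choice of shifts with $x_1+\alpha x_2=0$ so that the equation for the shifted characteristic functions descends to the character group $Y/S$ of $G$, where Lemma \ref{lem1} is applied in the converse direction. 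All the delicate points are handled correctly ($\widetilde\alpha(S)=S$ follows from $\alpha(G)=G$, and $A(Y,G)=S$ because $S$ is closed), so your proof is valid and matches the cited source's approach.
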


The proof of the following lemma essentially repeats the proof of item
2 of \cite[Theorem 2.1]{Rima}, where the lemma was proved under the 
assumption that $Y$ is a finite Abelian group.
For an arbitrary Abelian group $Y$, we use the notation $\mathrm{Aut}(Y)$ 
for the group of all automorphisms of $Y$.

\begin{lemma}\label{newle1} 
Let $Y$ be an Abelian group and let $\beta$ be 
an automorphism of $Y$ such that $I-\beta\in\mathrm{Aut}(Y)$.  
Let $f(y)$ and $g(y)$ be functions on the group $Y$ satisfying the equation
\begin{equation} \label{15.08.1}
f(u+v)g(u+\beta v)=f(u-v)g(u-\beta v), \quad u, v\in Y.
\end{equation}
Then $f(y)$ and $g(y)$ satisfy the equations
\begin{equation}\label{11.04.16}
f(y)=f(-(I+\beta)(I-\beta)^{-1} y)
g(-2\beta(I-\beta)^{-1} y), 
\quad y\in Y, 
\end{equation}
\begin{equation}\label{11.04.8}
g(y)=g((I+\beta)(I-\beta)^{-1} y)f(2(I-\beta)^{-1} y), 
\quad y\in Y. 
\end{equation}
Assume that the inequalities
$0\le f(y)\le 1$, $0\le g(y)\le 1$, $y\in Y$, are valid.
Put $\kappa=-f_4\beta(I-\beta)^{-2}$. Let 
$\kappa^my_0=y_0$ for some $y_0\in Y$ and some natural $m$.
Then  
 \begin{equation}\label{11.04.14}
f(y_0)=g(-2\beta(I-\beta)^{-1} y_0), 
\end{equation} 
\begin{equation}\label{11.04.15}
g(y_0)=f(2(I-\beta)^{-1} y_0). 
\end{equation} 
\end{lemma}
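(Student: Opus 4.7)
The plan is, first, to derive the auxiliary identities (\ref{11.04.16}) and (\ref{11.04.8}) from the functional equation (\ref{15.08.1}) by well-chosen specializations of the free variable $u$, and, second, to iterate these identities along the $\kappa$-orbit of $y_0$, using the bounds $0\le f,g\le 1$ to squeeze a chain of inequalities into equalities.

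For (\ref{11.04.16}) I substitute $u=-\beta v$ in (\ref{15.08.1}). A direct computation transforms it into
$$f((I-\beta)v)\,g(0)=f(-(I+\beta)v)\,g(-2\beta v),\quad v\in Y.$$
Since $I-\beta\in\mathrm{Aut}(Y)$ by hypothesis, I may set $y=(I-\beta)v$, so $v=(I-\beta)^{-1}y$, and invoking the implicit normalization $g(0)=1$ (natural in the context of characteristic functions in which the lemma is applied) yields (\ref{11.04.16}). Identity (\ref{11.04.8}) is obtained analogously by taking $u=v$ in (\ref{15.08.1}), which gives $f(2v)\,g((I+\beta)v)=f(0)\,g((I-\beta)v)$, and using the same change of variables $y=(I-\beta)v$ together with $f(0)=1$.

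To establish (\ref{11.04.14}) and (\ref{11.04.15}), let me write $A=-(I+\beta)(I-\beta)^{-1}$, $B=-2\beta(I-\beta)^{-1}$, $C=(I+\beta)(I-\beta)^{-1}$, $D=2(I-\beta)^{-1}$, so that (\ref{11.04.16}) reads $f(y)=f(Ay)\,g(By)$ and (\ref{11.04.8}) reads $g(y)=g(Cy)\,f(Dy)$. A short computation, using that $\beta$ commutes with the scalar endomorphisms, gives the key algebraic identity $DB=BD=-f_4\beta(I-\beta)^{-2}=\kappa$. From $0\le f,g\le 1$ each factor on the right-hand side is bounded by $1$, whence $f(y)\le g(By)$; substituting (\ref{11.04.8}) into $g(By)$ yields $g(By)=g(CBy)\,f(\kappa y)\le f(\kappa y)$. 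Iterating these two inequalities along the orbit $y_0,\kappa y_0,\ldots,\kappa^m y_0=y_0$ produces the chain
$$f(y_0)\le g(By_0)\le f(\kappa y_0)\le g(B\kappa y_0)\le\cdots\le f(\kappa^m y_0)=f(y_0),$$
which forces every inequality to be an equality. In particular $f(y_0)=g(By_0)$, which is (\ref{11.04.14}). Identity (\ref{11.04.15}) follows in exactly the same way from the companion chain $g(y)\le f(Dy)\le g(\kappa y)$, obtained by swapping the roles of the two identities.

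The crux of the argument is the algebraic identity $DB=\kappa$: it is precisely what makes the iteration close up after $m$ steps under the hypothesis $\kappa^m y_0=y_0$. Once this is observed, the remaining argument is an elementary monotonicity-style pinching that uses nothing beyond the bounds $0\le f,g\le 1$, so the main obstacle is really bookkeeping: choosing the right substitutions $u=-\beta v$ and $u=v$ and recognising that the two composite operators $DB$ and $BD$ coincide with $\kappa$.
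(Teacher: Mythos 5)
Your proof is correct and takes essentially the same route as the paper's: your specializations $u=-\beta v$ and $u=v$ are just reparametrizations of the paper's substitutions $u=\beta y,\ v=y$ and $u=v=y$ followed by $y\mapsto\pm(I-\beta)^{-1}y$, and the pinching of the chain $f(y)\le g(-2\beta(I-\beta)^{-1}y)\le f(\kappa y)$ along the $\kappa$-orbit of $y_0$ is exactly the paper's argument. The normalization $f(0)=g(0)=1$ that you flag explicitly is indeed needed for (\ref{11.04.16}) and (\ref{11.04.8}) (and is used silently by the paper when it drops the factors $g(0)$ and $f(0)$); it holds in the only application, where $f$ and $g$ are characteristic functions.
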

\begin{proof}
Substituting  first $u=\beta y$, $v=y$   and then $u=v=y$  
in equation (\ref{15.08.1}), we obtain respectively
\begin{equation}\label{11.04.10}
f((I+\beta) y)g(2\beta y)=f(-(I-\beta) y), 
\quad y\in Y,
\end{equation}
\begin{equation}\label{11.04.7}
f(2y)g((I+\beta) y)=g((I-\beta) y), \quad y\in Y. 
\end{equation}
Substituting first 
 $-(I-\beta)^{-1} y$ instead of $y$  in equation (\ref{11.04.10})  
 and then $(I-\beta)^{-1} y$ instead of $y$  
 in equation (\ref{11.04.7}), we receive  (\ref{11.04.16}) and (\ref{11.04.8}).

 Inasmuch as $0\le f(y)\le 1$ and $0\le g(y)\le 1$ for all $y\in Y$, we find
from (\ref{11.04.16}) and (\ref{11.04.8}) respectively  
\begin{equation}\label{11.04.11}
f(y)\le g(-2\beta(I-\beta)^{-1} y), \quad y\in Y, 
\end{equation}
\begin{equation}\label{11.04.9}
g(y)\le f(2(I-\beta)^{-1} y), \quad y\in Y. 
\end{equation}
From (\ref{11.04.11}) and (\ref{11.04.9}) we obtain
\begin{equation}\label{11.04.18}
f(y)\le g(-2\beta(I-\beta)^{-1} y)\le f(\kappa y), 
\quad y\in Y.
\end{equation}
Hence
\begin{equation}\label{11.04.17}
f(y)\le f(\kappa y), \quad y\in Y.
\end{equation}

Assume that   
  $\kappa^my_0=y_0$ for some $y_0\in Y$ and some natural $m$.  
  We find from  (\ref{11.04.17})  the inequalities
$$
f(y_0)\le f(\kappa y_0)\le \dots \le f(\kappa^{m-1} y_0)
\le f(\kappa^{m} y_0).
$$
Since $f(\kappa^{m} y_0)=f(y_0)$, we get
\begin{equation}\label{11.04.13}
f(y_0)=f(\kappa y_0)  
\end{equation}
and  equality (\ref{11.04.14}) 
follows from (\ref{11.04.18}) and (\ref{11.04.13}). 

An analogous statement for the function   $g(y)$ also holds. Indeed, 
it follows from (\ref{11.04.11}) and (\ref{11.04.9})  that

\begin{equation}\label{11.04.12}
g(y)\le f(2(I-\beta)^{-1} y)\le g(\kappa y), \quad y\in Y.
\end{equation} 
Hence
\begin{equation*}\label{n11.04.13}
g(y)\le g(\kappa y), \quad y\in Y.
\end{equation*} 
We get from here the inequalities
$$
g(y_0)\le g(\kappa y_0)\le \dots \le g(\kappa^{m-1} y_0)
\le g(\kappa^{m} y_0).
$$
This implies that
\begin{equation}\label{n11.04.18}
g(y_0)=g(\kappa y_0) 
\end{equation}
and equality (\ref{11.04.15}) 
follows from (\ref{11.04.12}) and (\ref{n11.04.18}). 
\end{proof}

\noindent\textit{Proof of Theorem} \ref{nth1}. 
Before proceeding to the proof of the theorem, let us make the following remarks.
Denote by $Y$ the character group of the group $X$ and by 
$Y_{p_j}$  the character group of the group $X_{p_j}$. The group 
$Y_{p_j}$ is topologically isomorphic to either the group 
 $\mathbb{Z}(p_j^{k_j})$ or
 $\mathbb{Z}(p_j^\infty)$,  and the group
 $Y$  is topologically isomorphic to the weak direct product of the group
$Y_{p_j}$. To avoid introducing additional notation, we assume that
\begin{equation}\label{1e20.35}
Y=\mathop\mathbf{P^*}\limits_{p_j\in {\mathcal P}}{Y}_{p_j},
\end{equation}  
where $Y_{p_j}$ is  either the group  $\mathbb{Z}(p_j^{k_j})$ or
 $\mathbb{Z}(p_j^\infty)$. A standard reasoning shows that any closed subgroup
of a group of the form (\ref{e20.35}) is topologically isomorphic to
a group of the form 
\begin{equation*} 
K=\mathop{ \mathbf{P}}\limits_{p_j \in {\mathcal S}}K_{p_j},
\end{equation*}
where $\mathcal S\subset \mathcal P$  and $K_{p_j}$ is a closed 
subgroup of $X_{p_j}$, i.e., $K$ is also a group of the form (\ref{e20.35}).
Moreover, $K$ is a characteristic subgroup of $X$. Similar statements are 
also true for the group
$Y$.
 
We divide the proof of the theorem into several steps. 
In items 1 and 2 we follow the scheme of the proof of  \cite[Theorem 2.1]{Rima}.
 
1. We show in this item that  there is a compact subgroup $G$ of 
the group $X$ 
such that $(I-\alpha)(G)=G$ and $\mu_j$ are 
shifts of some distributions $\lambda_j$
supported in  $G$. In so doing, 
 if  $\eta_j$ are independent random 
variables with values in $G$  and distributions $\lambda_j$, then 
the conditional  distribution of the linear form
$M_2=\eta_1 + \alpha_G\eta_2$ given $M_1=\eta_1 + \eta_2$ is symmetric.

Put
$$
S=\{y\in Y:|\hat\mu_1(y)|=|\hat\mu_2(y)|=1\}.
$$
By Lemma \ref{lem2}, $S$ is a  subgroup of the group  $Y$. 
It follows from  (\ref{1e20.35}) that $f_2\in\mathrm{Aut}(Y)$. 
This implies that 
 $S^{(2)}=S$. Set   $G=A(X, S)$. We have  $\alpha(G)=G$  because any
closed subgroup  of the group $X$ is characteristic.      
 All conditions of Lemma \ref{lem11} are fulfilled. 
By Lemma \ref{lem11},  we can  replace  the distributions  $\mu_j$ by their shifts  
$\lambda_j$   in such a way that   $\lambda_j$ are supported in $G$,
and   if   $\eta_j$
are independent random variables with values in
 $G$  and distributions $\lambda_j$, then the conditional 
distribution of the linear form $M_2=\eta_1 + \alpha_G\eta_2$ given 
$M_1=\eta_1 + \eta_2$  is symmetric.

Let us verify that $(I-\alpha)(G)=G$.
 Denote by $H$ the character group of the group $G$. We will 
 consider $\lambda_j$ as distributions on the group $G$. Then
\begin{equation}\label{y11.04.2}
\{h\in H:|\hat\lambda_1(h)|=|\hat\lambda_2(h)|=1\}=\{0\}.
\end{equation}
By Lemma \ref{lem1}, 
the characteristic functions $\hat\lambda_j(h)$   satisfy equation 
(\ref{11.04.1}) on the group $H$.
Put $$T=\mathrm{Ker}(I-\widetilde\alpha_G).$$    
It follows from $\widetilde\alpha_G h=h$ for all $h\in T$ that  
the restriction of equation (\ref{11.04.1}) for the characteristic functions
$\hat\lambda_j(h)$ to the subgroup $T$ takes the form
\begin{equation}\label{11.04.2}
\hat\lambda_1(u+v)\hat\lambda_2(u+v)=\hat\lambda_1(u-v)\hat\lambda_2(u-v), \quad u, v\in T.
\end{equation}
Substituting  $u=v=h$ in  equation (\ref{11.04.2}), we get 
\begin{equation}\label{n11.04.2}
\hat\lambda_1(2h)\hat\lambda_2(2h)=1, \quad h\in T.
\end{equation}
Since the subgroup $G$ is topologically isomorphic to
a group of the form (\ref{e20.35}), we have $f_2\in\mathrm{Aut}(H)$. 
 It follows from this that
 ${T^{(2)}}=T$ and (\ref{n11.04.2}) implies that 
\begin{equation}\label{ynn11.04.2}
|\hat\lambda_1(h)|=|\hat\lambda_2(h)|=1, \quad h\in T.
\end{equation}
We conclude from (\ref{y11.04.2}) and (\ref{ynn11.04.2})  
 that $T=\{0\}$. 
On the one hand, this implies that
$A(G, \mathrm{Ker}(I-\widetilde\alpha_G))=
A(G, T)=G$. On the other hand, 
$A(G, \mathrm{Ker}(I-\widetilde\alpha_G))=(I-\alpha_G)(G)=(I-\alpha)(G)$.
We proved that $(I-\alpha)(G)=G$.

2.   Put   $\nu_j=\lambda_j*\bar\lambda_j$. Then 
$\hat\nu_j(h)=|\hat\lambda_j(h)|^2\ge 0$ for all   $h\in H$, $j=1, 2$. 
Set $$f(h)=\hat\nu_1(h), \quad g(h)=\hat\nu_2(h), \quad h\in H.$$ 

As has been proven in item 1, $T=\mathrm{Ker}(I-\widetilde\alpha_G)=\{0\}$.
Since the subgroup $H$ is topologically isomorphic to a group of the form 
(\ref{1e20.35}), this implies that   $f_2, I-\widetilde\alpha_G\in \mathrm{Aut}(H)$. 
It follows from this that   
$\kappa=-f_4\widetilde\alpha_G(I-\widetilde\alpha_G)^{-2}\in \mathrm{Aut}(H)$. 
Take an element $h\in H$. Inasmuch as $H$ is topologically isomorphic to a group of the form 
(\ref{1e20.35}), there is $m$ depending, generally speaking, on $h$ such that
 $\kappa^mh=h$.
By Lemma \ref{lem1}, the characteristic functions $f(h)$ and $g(h)$ satisfy
the equation 
\begin{equation}\label{new1}
f(u+v)g(u+\widetilde\alpha_G v)=f(u-v)g(u-\widetilde\alpha_G v), \quad u, v\in H.
\end{equation}
Thus, all conditions of Lemma \ref{newle1},   where $Y=H$, 
 $\beta=\widetilde\alpha_G$, and $y_0$ is an arbitrary element of
 $H$, are fulfilled. By Lemma \ref{newle1}, the 
functions $f(h)$ and $g(h)$ satisfy  equations
(\ref{11.04.16}) and (\ref{11.04.8}) and equalities 
(\ref{11.04.14}) and (\ref{11.04.15})
are true for all $h\in H$.
 
3. Take $h\in H$. We prove in this item that
if either $f(h)\ne 0$ or $g(h)\ne 0$, then  
$h\in \mathrm{Ker}(I+\widetilde\alpha_G)$.

3a. Suppose that $f(h_1)\ne 0$. 
Consider the subgroup $\langle h_1\rangle$ of the group $H$ generated by
the element $h_1$. Put 
$$
h_2=-2\widetilde\alpha_G(I-\widetilde\alpha_G)^{-1} h_1.
$$  
We have
$-f_2\widetilde\alpha_G(I-\widetilde\alpha_G)^{-1}\in\mathrm{Aut}(H)$. 
Since $H$ is topologically isomorphic to a group of the form 
(\ref{1e20.35}), each subgroup of $H$ is characteristic.  
This implies that $h_2\in \langle h_1\rangle$. Moreover, the elements $h_1$ 
and $h_2$ have the same order  and hence $\langle h_1\rangle=
\langle h_2\rangle$. 
Since $f(h_1)\ne 0$, it follows from (\ref{11.04.14}) that $g(h_2)\ne 0$. 
Set 
\begin{equation}\label{nn11.04.15}
l_1=-(I+\widetilde\alpha_G)(I-\widetilde\alpha_G)^{-1} h_1.
\end{equation}
We find from   (\ref{11.04.16}) and (\ref{11.04.14}) that  then $f(l_1)=1$.  
By Lemma \ref{lem2}, the set $\{h\in H: f(h)=1\}$ is a subgroup of $H$. 
As far as $f(l_1)=1$, we conclude that
$f(h)=1$ at each element $h$ of the subgroup $\langle l_1\rangle$.
Put
\begin{equation}\label{nnn11.04.15}
l_2=(I+\widetilde\alpha_G)(I-\widetilde\alpha_G)^{-1} h_2.
\end{equation}
Since $g(h_2)\ne 0$, we find from  (\ref{11.04.8}) and (\ref{11.04.15})   
that  
$g(l_2)=1$. Taking into account that the set $\{h\in H: g(h)=1\}$ 
is a subgroup  of $H$, 
we see that
$g(h)=1$ at each element of the subgroup $\langle l_2\rangle$.
As far as the elements $h_1$ and $h_2$ have the same order, 
it follows from (\ref{nn11.04.15}) and  (\ref{nnn11.04.15}) that the elements 
$l_1$ and $l_2$ also have the same order. 
Hence
$\langle l_1\rangle=\langle l_2\rangle$. 
Thus, we proved that $f(h)=g(h)=1$ at each element $h$ of the subgroup
$\langle l_1\rangle$. 

It follows from (\ref{y11.04.2}) 
that if
$f(h)=g(h)=1$ at an element $h\in H$, then $h=0$. If we take 
into consideration that
$f(h)=g(h)=1$ at each element $h$ of the subgroup
$\langle l_1\rangle$, we conclude that $\langle l_1\rangle=\{0\}$, i.e., 
$l_1=0$. Taking into account (\ref{nn11.04.15}) and the fact that 
$(I-\widetilde\alpha_G)^{-1}\in\mathrm{Aut}(H)$, we see that  
 if $f(h_1)\ne 0$ at  an element $h_1\in H$, then 
 $h_1\in \mathrm{Ker}(I+\widetilde\alpha_G)$.

3b. Suppose now that $g(m_1)\ne 0$ and 
argue in the same way as in subitem  3a. 
Consider the subgroup $\langle m_1\rangle$ of $H$ generated by
the element $m_1$. Put $$m_2=2(I-\widetilde\alpha_G)^{-1} m_1.$$  
As far as $f_2(I-\widetilde\alpha_G)^{-1}\in\mathrm{Aut}(H)$ 
and each subgroup of 
$H$ is characteristic, we conclude that
$m_2\in \langle m_1\rangle$. Moreover, the elements 
$m_1$ and $m_2$ have the same order  and hence 
$\langle m_1\rangle=\langle m_2\rangle$. It follows from
(\ref{11.04.15}) that $f(m_2)\ne 0$. 
Put 
\begin{equation}\label{x1}
n_1=(I+\widetilde\alpha_G)(I-\widetilde\alpha_G)^{-1} m_1.
\end{equation}
We find from   (\ref{11.04.8}) and (\ref{11.04.15}) that  then 
$g(n_1)=1$.  
Inasmuch as the set $\{m\in H: g(m)=1\}$ is a subgroup  of $H$,
$g(m)=1$ at each element $m$ of the subgroup $\langle n_1\rangle$.
Put 
\begin{equation}\label{x2}
n_2=-(I+\widetilde\alpha_G)(I-\widetilde\alpha_G)^{-1} m_2.
\end{equation}
In view of $f(m_2)\ne 0$, we find from  (\ref{11.04.16}) and  (\ref{11.04.14})   
that  
$f(n_2)=1$.   Arguing for the function $f(h)$ in the same way as in subitem 
3a for the function 
$g(h)$, we come to the conclusion that
$f(m)=1$ at each element $m$ of the subgroup $\langle n_2\rangle$.
Since the elements $m_1$ and $m_2$ have the same order, in view of (\ref{x1}) 
and  (\ref{x2}), the elements $n_1$ and $n_2$ also  
have the same order, and we have
$\langle n_1\rangle=\langle n_2\rangle$. 
Thus, we proved that $f(h)=g(h)=1$ at each element of the subgroup
$\langle n_1\rangle$. 

It follows from (\ref{y11.04.2}) 
that if
$f(h)=g(h)=1$ at an element $h\in H$, then $h=0$. Due to the fact that
$f(h)=g(h)=1$ at each element $h$ of the subgroup
$\langle n_1\rangle$, we conclude that $\langle n_1\rangle=\{0\}$, 
i.e., $n_1=0$.  Taking into account (\ref{x1}) and the fact that 
$(I-\widetilde\alpha_G)^{-1}\in\mathrm{Aut}(H)$, we see that  
 if $g(m_1)\ne 0$ at  an element $m_1\in H$, then 
 $m_1\in \mathrm{Ker}(I+\widetilde\alpha_G)$. 

4. We prove in this item that 
$\lambda_1=\lambda_2$. From what was proven in item 3 it follows
that  the characteristic 
functions $\hat\lambda_j(h)$ are represented in the form
\begin{equation}\label{nn11.04.16}
\hat\lambda_j(h) = \begin{cases}a_j(h)  &\text{\ if\ }\ \ 
h\in \mathrm{Ker}(I+\widetilde\alpha_G), \\ 0& 
 \text{\ if\ }\ \ h\notin \mathrm{Ker}(I+\widetilde\alpha_G),
\\
\end{cases}
\end{equation}
where $a_j(h)$ are some characteristic functions on 
the group $\mathrm{Ker}(I+\widetilde\alpha_G)$.
As as noted in the proof of item 1, the characteristic functions $\hat\lambda_j(h)$   satisfy equation (\ref{11.04.1}). Consider the restriction of equation (\ref{11.04.1}) for the 
characteristic functions $\hat\lambda_j(h)$ to the subgroup 
$\mathrm{Ker}(I+\widetilde\alpha_G)$. Since $\widetilde\alpha_G h=-h$ for all 
$h\in \mathrm{Ker}(I+\widetilde\alpha_G)$, we find from  (\ref{nn11.04.16}) 
that the characteristic functions $a_j(h)$ satisfy the equation
\begin{equation}\label{y}
a_1(u+v)a_2(u-v)=a_1(u-v)a_2(u+v), \quad u, v\in \mathrm{Ker}(I+\widetilde\alpha_G).
\end{equation}
Substituting $u=v=h$  in equation (\ref{y}) and taking into account 
that the subgroup $\mathrm{Ker}(I+\widetilde\alpha_G)$ contains 
no elements of order 2, we conclude
that $a_1(h)=a_2(h)$ for all  $h\in \mathrm{Ker}(I+\widetilde\alpha_G)$. 
Put $a(h)=a_1(h)=a_2(h)$.
In view of
(\ref{nn11.04.16}), this implies that $\hat\lambda_1(h) 
= \hat\lambda_2(h)$, $h\in H$.
Hence $\lambda_1=\lambda_2$. Put $\lambda=\lambda_1=\lambda_2$. The characteristic 
function $\hat\lambda(h)$
is of the form
\begin{equation}\label{z}
\hat\lambda(h) = \begin{cases}a(h)  
&\text{\ if\ }\ \ h\in \mathrm{Ker}(I+\widetilde\alpha_G), \\ 0& 
\text{\ if\ }\ \ h\notin \mathrm{Ker}(I+\widetilde\alpha_G).
\\
\end{cases}
\end{equation}
Taking into account what was proved in item 1, 
statements (i) and (iv) are proved. Moreover, 
 it follows from (\ref{y11.04.2}) 
that statement (ii) is also   proved.

5. To complete the proof of the theorem, we prove in this item 
that statement (iii) holds. 

We note that $A(H, (I+\alpha_G)(G))=\mathrm{Ker}(I+\widetilde\alpha_G)$. 
Taking into account
(\ref{fe22.1}), this implies that the characteristic function of the  
Haar distribution $m_{(I+\alpha_G)(G)}$ is of the form
\begin{equation}
\label{nfe22.1}\widehat m_{(I+\alpha_G)(G)}(h)=
\begin{cases}
1 & \text{\ if\ }\ \ h\in \mathrm{Ker}(I+\widetilde\alpha_G),
\\ 0 & \text{\ if\ }\ \ h\not\in
\mathrm{Ker}(I+\widetilde\alpha_G).
\end{cases}
\end{equation}
It follows from (\ref{z}) and (\ref{nfe22.1}) that 
$\hat\lambda(h)=\hat\lambda(h)\widehat m_{(I+\alpha_G)(G)}(h)$ for all $h\in H$.
Hence $\lambda=\lambda*m_{(I+\alpha_G)(G)}$, i.e.,
 the Haar distribution $m_{(I+\alpha_G)G}$ is a factor of the 
 distributions $\lambda$. In view of $(I+\alpha)(G)=(I+\alpha_G)(G)$, 
statement  (iii) is proved and hence the theorem is completely proved. 
   $\hfill\Box$
   
\medskip

The following statement results from the proof of Theorem \ref{nth1}.

\begin{corollary}\label{nnco1} Let $X$ be a compact
totally disconnected Abelian group of the form
$(\ref{e20.35})$ with character group
 $Y$. Assume that all conditions of 
Theorem $\ref{nth1}$ are fulfilled and the characteristic functions $\hat\mu_j(y)$ satisfy the condition
$$
\{y\in Y:|\hat\mu_1(y)|=|\hat\mu_2(y)|=1\}=\{0\}.
$$
Then $(I-\alpha)(X)=X$, $\mu_1=\mu_2=\mu$, $X$ is the minimal subgroup containing 
the support of 
$\mu$, and the Haar distribution $m_{(I+\alpha)(X)}$ 
is a factor of of $\mu$.
\end{corollary}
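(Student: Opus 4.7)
The plan is to trace through the proof of Theorem \ref{nth1} and observe how the extra hypothesis forces several simplifications. The hypothesis of the corollary says precisely that the subgroup $S=\{y\in Y:|\hat\mu_1(y)|=|\hat\mu_2(y)|=1\}$ introduced in item 1 of the proof of Theorem \ref{nth1} equals $\{0\}$. Therefore $G=A(X,S)=A(X,\{0\})=X$, and the equality $(I-\alpha)(G)=G$ already established in that item immediately yields $(I-\alpha)(X)=X$.

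Next I would argue that the shifts furnished by Lemma \ref{lem11} may here be taken to be trivial, so that $\lambda_j=\mu_j$. Both conclusions of Lemma \ref{lem11} hold for this choice: $\mathrm{supp}(\lambda_j)\subseteq G$ is automatic because $G=X$, and the symmetry of the conditional distribution of $M_2=\eta_1+\alpha_G\eta_2$ given $M_1=\eta_1+\eta_2$ for $\eta_j\sim\lambda_j=\mu_j$ is exactly the original hypothesis (since $\alpha_G=\alpha$ and $\eta_j$ is distributed as $\xi_j$). Items 2, 3, and 4 of the proof of Theorem \ref{nth1} then apply verbatim with $\lambda_j=\mu_j$ and $H=Y$; in particular condition (\ref{y11.04.2}) coincides with the corollary's hypothesis. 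The outcome is $\mu_1=\mu_2=:\mu$, with $\hat\mu$ vanishing outside $\mathrm{Ker}(I+\widetilde\alpha)$. Item 5 of the same proof then produces the identity $\hat\mu(y)=\hat\mu(y)\widehat{m_{(I+\alpha)(X)}}(y)$ for all $y\in Y$, so $m_{(I+\alpha)(X)}$ is a factor of $\mu$.

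Finally, the minimality of $X$ as the closed subgroup containing $\mathrm{supp}(\mu)$ is statement (ii) of Theorem \ref{nth1} applied to $\lambda=\mu$ and $G=X$. Alternatively one can see it directly: if $F\subseteq X$ is a closed subgroup with $\mathrm{supp}(\mu)\subseteq F$, then $\hat\mu(y)=1$ for every $y\in A(Y,F)$, so $A(Y,F)\subseteq\{y\in Y:|\hat\mu(y)|=1\}=\{0\}$ by the hypothesis, and annihilator duality forces $F=X$.

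The only step with any real content is the observation that the shifts in Lemma \ref{lem11} may be chosen to be trivial in this setting; it is precisely this that converts the theorem's conclusions about the auxiliary distribution $\lambda$ into conclusions about the original $\mu_1$ and $\mu_2$. Since this verification is essentially bookkeeping, I do not expect any serious obstacle.
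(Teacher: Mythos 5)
Your proposal is correct and follows exactly the route the paper intends: the paper states that the corollary ``results from the proof of Theorem \ref{nth1}'', and your tracing of that proof --- noting that the hypothesis forces $S=\{0\}$, hence $G=X$, hence the shifts from Lemma \ref{lem11} may be taken trivial so that $\lambda_j=\mu_j$, after which items 2--5 apply verbatim with $H=Y$ --- is precisely that derivation. The key observation (that the conclusions about $\lambda$ become conclusions about the original $\mu_j$ because no nontrivial shift is needed) is correctly identified and justified.
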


Let $X$ be a finite 
   cyclic group of odd order. Then $X$ is isomorphic to 
   a group of the form (\ref{e20.35}) and Theorem \ref{nth1}
   implies the following statement. 

\begin{corollary}\label{a1}  Let $X$ be a finite cyclic group of 
odd order and let  $\alpha$ be an automorphism of $X$.  
 Let $\xi_1$ and $\xi_2$ be independent random variables with values in 
 the group  $X$ 
and distributions $\mu_1$ and $\mu_2$. Assume that the conditional  
distribution of the linear form $L_2 = \xi_1 + \alpha\xi_2$ given 
$L_1 = \xi_1 + \xi_2$ is symmetric. 
Then there is a subgroup $G$ of the group $X$
satisfying the condition $(I-\alpha)(G)=G$  and a distribution  
$\lambda$ supported in $G$ such that 
statements {\rm (i)--(iv)} of Theorem {\rm\ref{nth1}} are true.
\end{corollary}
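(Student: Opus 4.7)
The plan is very short: this is a direct specialization of Theorem \ref{nth1}, so the only real work is to verify that a finite cyclic group of odd order fits the shape (\ref{e20.35}). First, I would let $n$ denote the order of $X$, write its prime factorization $n=p_1^{k_1}\cdots p_r^{k_r}$, and observe that since $n$ is odd we have $p_j\ne 2$ for each $j$. Then, by the Chinese remainder theorem, there is an isomorphism
\begin{equation*}
X\cong \mathbb{Z}(p_1^{k_1})\times\mathbb{Z}(p_2^{k_2})\times\cdots\times\mathbb{Z}(p_r^{k_r}),
\end{equation*}
which puts $X$ into the form (\ref{e20.35}) with the finite set $\mathcal{P}=\{p_1,\dots,p_r\}$ of pairwise distinct odd primes and $X_{p_j}=\mathbb{Z}(p_j^{k_j})$.

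Once $X$ is identified (via this topological group isomorphism) with such a product, every hypothesis of Theorem \ref{nth1} is automatically in force: $\alpha$ is a topological automorphism of this finite (hence compact and totally disconnected) group, and $\xi_1,\xi_2$ are independent random variables in $X$ whose induced linear forms $L_1,L_2$ satisfy the symmetry condition on the conditional distribution. Applying Theorem \ref{nth1} directly yields a compact (in fact finite) subgroup $G\subset X$ with $(I-\alpha)(G)=G$, and a distribution $\lambda$ on $G$, satisfying items (i)--(iv), which is exactly the assertion of the corollary.

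There is no substantive obstacle here; the only thing to be careful about is that the isomorphism furnished by the Chinese remainder theorem is a topological group isomorphism (automatic in the finite case) and that it transports the given automorphism $\alpha$ and the distributions $\mu_j$ to the product realization, so that Theorem \ref{nth1} may be invoked verbatim. After the invocation, the subgroup $G$ and the distribution $\lambda$ produced on the product side are transported back to $X$ to obtain the statement in its original formulation.
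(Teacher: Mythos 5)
Your proposal is correct and matches the paper's argument exactly: the paper likewise observes that a finite cyclic group of odd order is isomorphic to a product of cyclic $p_j$-groups for pairwise distinct odd primes (hence of the form (\ref{e20.35})) and then invokes Theorem \ref{nth1} directly. Your added care about transporting $\alpha$ and the $\mu_j$ through the isomorphism is sound but not a departure from the paper's route.
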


\begin{corollary}\label{co1} Let $X$ be a finite cyclic group of odd order and let
 $\alpha$ be an automorphism of $X$.  Let $\xi_1$ and $\xi_2$ be
independent random variables with values in the group  $X$ and distributions
$\mu_1$ and $\mu_2$. Assume that the conditional  distribution of 
the linear form $L_2 = \xi_1 + \alpha\xi_2$ 
given $L_1 = \xi_1 + \xi_2$ is symmetric. 
If $\mathrm{Ker}(I+\alpha)=\{0\}$, then there is a subgroup $G$ of the group $X$ such that 
  $\mu_j$ are shifts of the Haar distribution $m_G$.
\end{corollary}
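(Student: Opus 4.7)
The plan is to invoke Corollary \ref{a1} to produce a subgroup $G$ of $X$ with $(I-\alpha)(G)=G$ and a distribution $\lambda$ supported in $G$ for which statements (i)--(iv) of Theorem \ref{nth1} hold, and then to use the extra hypothesis $\mathrm{Ker}(I+\alpha)=\{0\}$ to identify $\lambda$ as the Haar distribution $m_G$.

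First I would observe that since $X$ is finite cyclic, every subgroup is characteristic, so $\alpha(G)=G$ and the restriction $\alpha_G$ is an automorphism of $G$. The key point is that any $g\in \mathrm{Ker}(I+\alpha_G)$ satisfies $(I+\alpha)g=0$ in $X$, hence $g=0$ by the hypothesis. Thus $I+\alpha_G$ is an injective endomorphism of the finite group $G$, hence an automorphism, so $(I+\alpha)(G)=(I+\alpha_G)(G)=G$. Consequently, the Haar distribution appearing in statement (iii) of Theorem \ref{nth1} is exactly $m_G$.

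Next, statement (iii) gives a factorization $\lambda=m_G*\nu$ for some distribution $\nu$ on $G$. Passing to the character group $H$ of $G$ and applying formula (\ref{fe22.1}) with $K=G$ (viewed inside $G$), we have $\widehat{m}_G(h)=1$ for $h=0$ and $\widehat{m}_G(h)=0$ otherwise, so $\hat\lambda(h)=\widehat{m}_G(h)\hat\nu(h)=\widehat{m}_G(h)$ for every $h\in H$. By uniqueness of the Fourier transform $\lambda=m_G$, and statement (i) of Theorem \ref{nth1} then asserts that $\mu_j$ are shifts of $m_G$, which is the desired conclusion.

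I do not anticipate any serious obstacle. The only step that requires a moment's thought is the passage from the kernel condition $\mathrm{Ker}(I+\alpha)=\{0\}$ on $X$ to $(I+\alpha_G)$ being an automorphism of $G$, which reduces to injectivity plus finiteness of $G$; everything else is a direct reading of the conclusions of Theorem \ref{nth1} together with the explicit form of $\widehat{m}_G$.
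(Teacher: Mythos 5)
Your proposal is correct and follows essentially the same route as the paper: apply Corollary \ref{a1}, use $\mathrm{Ker}(I+\alpha)=\{0\}$ together with finiteness to get $(I+\alpha)(G)=G$, and then conclude from statement (iii) that a distribution supported in $G$ having $m_G$ as a factor must equal $m_G$. The only cosmetic difference is that the paper deduces $I+\alpha\in\mathrm{Aut}(X)$ globally while you restrict to $G$ first, and it states the final identification $\lambda=m_G$ directly where you spell it out via characteristic functions; both are equivalent.
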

\begin{proof} 
   By Corollary \ref{a1}, there is a subgroup $G$ of the group $X$ such 
   that  the distributions $\mu_j$ are shifts of 
a distribution $\lambda$ supported in $G$. 
 Inasmuch as $\mathrm{Ker}(I+\alpha)=\{0\}$ and $X$ 
is a finite group, 
we have $I+\alpha\in\mathrm{Aut}(X)$. This implies that $(I+\alpha)(G)=G$. 
Hence $m_{(I+\alpha)(G)}=m_{G}$. By Corollary \ref{a1}, the Haar distribution  
$m_{(I+\alpha)(G)}$ is a factor of
$\lambda$. It follows from this that the Haar distribution  $m_G$ is a factor of
$\lambda$. Taking into account that the distribution $\lambda$ is supported in $G$, 
we have $\lambda=m_G$. 
\end{proof}
We note that Corollary \ref{co1} is a particular case  of Theorem 1 in \cite{Fe2}, see also 
\cite[Theorem 10.2]{Febooknew} which refers to arbitrary finite Abelian groups of odd order. Theorem 1 was proved 
in \cite{Fe2} in a different way.

\begin{corollary}\label{15.08co1} Let $X$ be a compact
totally disconnected Abelian group of the form
$(\ref{e20.35})$ with character group
 $Y$. Assume that all conditions of 
Theorem $\ref{nth1}$ are fulfilled and the characteristic functions $\hat\mu_j(y)$
do not vanish. Then $\mu_j$ are shifts of a distribution supported in 
$\mathrm{Ker}(I+\alpha)$.
\end{corollary}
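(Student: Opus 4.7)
The plan is to apply Theorem \ref{nth1} directly and then use the non-vanishing hypothesis to force the subgroup $\mathrm{Ker}(I+\widetilde\alpha_G)$ produced in the proof of Theorem \ref{nth1} to exhaust the whole character group of $G$, which by duality forces $G\subseteq\mathrm{Ker}(I+\alpha)$.

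First I would invoke Theorem \ref{nth1} to obtain a compact subgroup $G$ of $X$ with $(I-\alpha)(G)=G$ and a distribution $\lambda$ supported in $G$ such that $\mu_j$ are shifts of $\lambda$. Crucially, the proof of Theorem \ref{nth1} established formula (\ref{z}): denoting by $H$ the character group of $G$, the characteristic function of $\lambda$ (viewed on $H$) vanishes outside $\mathrm{Ker}(I+\widetilde\alpha_G)$. This is exactly the structural fact we need to exploit.

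Next, since $\mu_j$ are shifts of $\lambda$, we have $|\hat\mu_j(y)|=|\hat\lambda(y)|$ for every $y\in Y$, where $\hat\lambda$ is regarded as the characteristic function of $\lambda$ considered as a distribution on $X$. The hypothesis that $\hat\mu_j$ does not vanish therefore translates into $\hat\lambda(y)\ne 0$ for all $y\in Y$. The standard duality identification $H\cong Y/A(Y,G)$ realizes $H$ as a quotient of $Y$ via the restriction map, and $\hat\lambda$ on $Y$ factors through this quotient; in particular, surjectivity of $Y\to H$ lets me lift the non-vanishing of $\hat\lambda$ from $Y$ to $H$, so $\hat\lambda(h)\ne 0$ for every $h\in H$.

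Combining this with (\ref{z}) forces $\mathrm{Ker}(I+\widetilde\alpha_G)=H$, i.e.\ $\widetilde\alpha_G=-I$ on $H$. Passing to adjoints gives $\alpha_G=-I$ on $G$, equivalently $(I+\alpha)g=0$ for all $g\in G$, i.e.\ $G\subseteq\mathrm{Ker}(I+\alpha)$. Since $\lambda$ is supported in $G$, it is supported in $\mathrm{Ker}(I+\alpha)$, and the corollary is established. There is no real obstacle here: the entire content of the corollary is a direct reading of the support condition (\ref{z}) produced inside the proof of Theorem \ref{nth1} under the extra assumption of non-vanishing characteristic functions; the only mild point requiring care is the transfer of the non-vanishing property from $Y$ to $H$ via the surjective restriction map, which is immediate from Pontryagin duality for a closed subgroup of a compact group.
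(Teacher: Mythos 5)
Your proposal is correct and follows essentially the same route as the paper's own proof: apply Theorem \ref{nth1}, read off from formula (\ref{z}) that $\hat\lambda$ vanishes off $\mathrm{Ker}(I+\widetilde\alpha_G)$, use the non-vanishing hypothesis to conclude $\mathrm{Ker}(I+\widetilde\alpha_G)=H$, and dualize to get $G\subset\mathrm{Ker}(I+\alpha)$. Your extra care in transferring non-vanishing from $Y$ to $H$ via the surjective restriction map is a point the paper passes over silently, but the argument is the same.
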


\begin{proof} By Theorem \ref{nth1}, there is a compact
 subgroup $G$ of the group $X$ such 
   that  the distributions $\mu_j$ are shifts of 
a distribution $\lambda$ supported in $G$. 
 Denote by $H$ the character group of the group $G$.
 By Theorem \ref{nth1}, the characteristic function 
 $\hat\lambda(h)$  is of the form 
 (\ref{z}), where $a(h)$ is  a characteristic function on the subgroup $\mathrm{Ker}(I+\widetilde\alpha_G)$.  Since the characteristic functions $\hat\mu_j(y)$ do not vanish,   
the characteristic function  $\hat\lambda(h)$ also does not vanish.
Taking into account (\ref{z}), this means that 
$\mathrm{Ker}(I+\widetilde\alpha_G)=H$, 
 i.e., $\widetilde\alpha_G h=-h$ for all $h\in H$. Hence $\alpha_G g=-g$ for 
all $g\in G$.
This implies that
 $G=\mathrm{Ker}(I+\alpha_G)\subset \mathrm{Ker}(I+\alpha)$, i.e., $\lambda$
is supported in $\mathrm{Ker}(I+\alpha)$.
\end{proof}

%\begin{remark}\label{ren1} It is easy to see that Corollary \ref{15.08co1} 
%remains valid if the condition
% \begin{flushleft}
%{\rm(I)} the characteristic functions $\hat\mu_j(y)$  
%do not vanish;
%\end{flushleft}
%is replaced by a weaker condition 
%\begin{flushleft}
%{\rm(II)} the minimal subgroup generated by the sets $\{y\in Y:\hat\mu_j(y)\ne 0\}$, $j=1, 2$,  
%coincides with $Y$.
%\end{flushleft}
%\end{remark}

\begin{corollary}\label{mmco1} 
Consider the 
 group of $p$-adic integers $\mathbb{Z}_p$, where  
$p\ne 2$. 
Let  $\alpha=(c_0, c_1,\dots, c_n,\dots)$, where $c_0\ne p-1$,
 be a topological automorphism of the group $\mathbb{Z}_p$. 
 Let $\xi_1$ and $\xi_2$ be
independent random variables with values in   $\mathbb{Z}_p$ 
and distributions
$\mu_1$ and $\mu_2$. Assume that the conditional  distribution 
of the linear form $L_2 = \xi_1 + \alpha\xi_2$ given 
$L_1 = \xi_1 + \xi_2$ is symmetric. 
Then there is a compact subgroup $G$ of the group $\mathbb{Z}_p$
 such that $\mu_j$ are shifts of $m_G$. Furthermore, if $c_0=1$, then $G=\{0\}$, i.e. 
$\mu_j$ are degenerate distributions.
 \end{corollary}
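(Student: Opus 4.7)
My plan is to apply Theorem \ref{nth1} directly to $\mathbb{Z}_p$, which already fits the template (\ref{e20.35}) with $\mathcal{P}=\{p\}$ and $X_p=\mathbb{Z}_p$; the hypothesis $p\ne 2$ ensures the theorem applies. This will produce a compact subgroup $G\subset\mathbb{Z}_p$ with $(I-\alpha)(G)=G$ and a distribution $\lambda$ supported in $G$ such that $\mu_j$ are shifts of $\lambda$ and the Haar distribution $m_{(I+\alpha)(G)}$ is a factor of $\lambda$.

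The key step is to translate the assumption $c_0\ne p-1$ into the assertion $I+\alpha\in\mathrm{Aut}(\mathbb{Z}_p)$. Indeed, $\alpha$ is multiplication by $c=(c_0,c_1,\dots)$, so $I+\alpha$ is multiplication by $1+c$, whose leading coordinate is $(c_0+1)\bmod p$; this is nonzero precisely when $c_0\ne p-1$, i.e., precisely when $1+c\in\mathbb{Z}_p^\times$. Since every closed subgroup of $\mathbb{Z}_p$ is characteristic, $I+\alpha$ restricts to an automorphism of $G$, so $(I+\alpha)(G)=G$ and hence $m_{(I+\alpha)(G)}=m_G$. Statement (iii) of Theorem \ref{nth1} then reads $\lambda=\lambda*m_G$. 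Because $\lambda$ is supported in the compact group $G$ and $m_G$ is the translation-invariant probability measure on $G$, the convolution $\lambda*m_G$ collapses to $m_G$ (one can see this by writing $\lambda*m_G=\int_G(\delta_x*m_G)\,d\lambda(x)=m_G$). Hence $\lambda=m_G$, and $\mu_j$ are shifts of $m_G$, proving the first assertion.

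For the degenerate case $c_0=1$, I would argue as follows. Then $1-c\in\mathbb{Z}_p$ has zero leading coordinate, so $1-c\in p\mathbb{Z}_p$; consequently $I-\alpha$ is multiplication by an element of $p\mathbb{Z}_p$. Every nonzero closed subgroup of $\mathbb{Z}_p$ has the form $p^k\mathbb{Z}_p$, and $(I-\alpha)(p^k\mathbb{Z}_p)=(1-c)p^k\mathbb{Z}_p\subseteq p^{k+1}\mathbb{Z}_p\subsetneq p^k\mathbb{Z}_p$. The condition $(I-\alpha)(G)=G$ supplied by Theorem \ref{nth1} therefore forces $G=\{0\}$, so $\lambda=\delta_0$ and $\mu_j$ are degenerate. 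The argument presents no real obstacle; the only content is the translation between $c_0\ne p-1$ and $I+\alpha\in\mathrm{Aut}(\mathbb{Z}_p)$, together with the observation that all closed subgroups of $\mathbb{Z}_p$ are characteristic.
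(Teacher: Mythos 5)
Your proposal is correct and follows essentially the same route as the paper: apply Theorem \ref{nth1} to $\mathbb{Z}_p$, use $c_0\ne p-1$ to get $I+\alpha\in\mathrm{Aut}(\mathbb{Z}_p)$ and hence $(I+\alpha)(G)=G$ via the characteristic-subgroup property, conclude $\lambda=m_G$ from statement (iii), and for $c_0=1$ use $(I-\alpha)(G)=G$ together with $1-c\in p\mathbb{Z}_p$ to force $G=\{0\}$. The extra details you supply (the carry computation for $1+c$ and the identity $\lambda*m_G=m_G$ for $\lambda$ supported in $G$) are sound and merely make explicit what the paper leaves implicit.
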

\begin{proof}
The group $\mathbb{Z}_p$ is a group of the form (\ref{e20.35}). 
Therefore, Theorem \ref{nth1} is applicable to the group $\mathbb{Z}_p$.
It follows from $c_0\ne p-1$ that $I+\alpha\in\mathrm{Aut}(\mathbb{Z}_p)$.
Since any subgroup of the group $\mathbb{Z}_p$ is characteristic,
this implies that $(I+\alpha)(G)=G$. Hence $m_{(I+\alpha)(G)}=m_G$. 
By statement (iii) of Theorem \ref{nth1},
the Haar distribution $m_{(I+\alpha)(G)}$ is a factor of $\lambda$.
Inasmuch as $\lambda$ is supported in $G$, 
it follows from this that $\lambda=m_G$.

By Theorem \ref{nth1}, 
we have $(I-\alpha)(G)=G$. If $c_0=1$, then the only subgroup $G$ 
of the group $\mathbb{Z}_p$, satisfying the condition $(I-\alpha)(G)=G$, is $G=\{0\}$. 
\end{proof}
We note that Corollary \ref{mmco1}
was proved in a different way in \cite[Theorem 2]{Fe2015a},  
see also \cite[Theorem 13.3]{Febooknew}.

\begin{remark}\label{re1}Now we will show that Theorem  \ref{nth1}  
 can not be 
strengthened by narrowing the class of distributions which are 
characterized by the symmetry of the conditional distribution of 
one linear form given another.  
 
Let $X$ be a second countable locally compact Abelian group and let 
$\alpha$ be a topological automorphism of the group $X$. Let $G$ be a 
closed subgroup 
of $X$ such that $\alpha(G)=G$, i.e. $\alpha$ is a topological
automorphism of the group $G$. Suppose
that $I-\alpha_G\in\mathrm{Aut}(G)$ and $\overline{(I+\alpha_G)(G)}$ 
is a compact subgroup
of $G$. We note that $(I+\alpha)(G)=(I+\alpha_G)(G)$.
 Let $\lambda$ be a  distribution supported in  $G$ such that 
 $G$ is the minimal subgroup containing the support of 
$\lambda$. 
Assume also that the Haar 
 distribution $m_{\overline{(I+\alpha_G)(G)}}$ is a factor of  $\lambda$. 
 Let $x_1$ and $x_2$ be elements of  the group 
 $X$ such that 
 \begin{equation}\label{nz}
x_1+\alpha x_2=0.
\end{equation}
 Put $\mu_j=\lambda*E_{x_j}$, where $E_{x_j}$ is the degenerate distribution
concentrated at the element $x_j$, $j=1, 2$. 
 
 Let $\xi_j$   be
independent random variables with values in the group $X$ and distributions
$\mu_j$. We will verify that the conditional  distribution of the 
linear form $L_2 = \xi_1 + \alpha\xi_2$ given $L_1 = \xi_1 + \xi_2$ is symmetric.

Denote by $Y$ the character group of the group $X$ and 
by $H$ the character group of the group $G$. Note that 
$\mathrm{Ker}(I+\widetilde\alpha_G)=A(H, \overline{(I+\alpha_G)(G)})$. 
Taking into account
(\ref{fe22.1}) and the fact that  the Haar distribution 
$m_{\overline{(I+\alpha_G)(G)}}$ is a factor of  $\lambda$, 
the characteristic function
$\hat\lambda(h)$ is represented in the form (\ref{z}), where $a(h)$ 
is a characteristic function on $\mathrm{Ker}(I+\widetilde\alpha_G)$. 
We will check that 
$\hat\lambda(h)$ satisfies the equation
\begin{equation}\label{new11.04.1}
\hat\lambda (u+v )\hat\lambda (u+\widetilde\alpha_G v )=
\hat\lambda (u-v )\hat\lambda (u-\widetilde\alpha_G v), \quad u, v \in H.
\end{equation}
Consider 3 cases.

1. $u, v\in \mathrm{Ker}(I+\widetilde\alpha_G)$. Since 
$\widetilde\alpha_G h=-h$ for all
$h\in \mathrm{Ker}(I+\widetilde\alpha_G)$,  equation (\ref{new11.04.1}) 
becomes an equality.

2. Either $u\in \mathrm{Ker}(I+\widetilde\alpha_G)$, 
$v\notin \mathrm{Ker}(I+\widetilde\alpha_G)$
or $u\notin \mathrm{Ker}(I+\widetilde\alpha_G)$, 
$v\in \mathrm{Ker}(I+\widetilde\alpha_G)$. Then
$u\pm v\notin \mathrm{Ker}(I+\widetilde\alpha_G)$ and both sides of 
equation  (\ref{new11.04.1}) are equal to zero.

3. $u, v\notin \mathrm{Ker}(I+\widetilde\alpha_G)$. Assume that the left-hand side of
equation 
(\ref{new11.04.1}) is not equal to zero. Then 
$u+v\in \mathrm{Ker}(I+\widetilde\alpha_G)$ 
and
 $u+\widetilde\alpha_G v\in \mathrm{Ker}(I+\widetilde\alpha_G)$. This implies 
 that $(I-\widetilde\alpha_G)v\in \mathrm{Ker}(I+\widetilde\alpha_G)$. Inasmuch as  
  $I-\alpha_G\in\mathrm{Aut}(G)$, we have
  $I-\widetilde\alpha_G\in\mathrm{Aut}(H)$ and 
  $(I-\widetilde\alpha_G)v\in \mathrm{Ker}(I+\widetilde\alpha_G)$ 
  implies that $v\in  \mathrm{Ker}(I+\widetilde\alpha_G)$ that contradicts 
the assumption. Thus,  the left-hand side of equation 
(\ref{new11.04.1}) is equal to zero.  Arguing similarly, we make sure that if
$u, v\notin \mathrm{Ker}(I+\widetilde\alpha_G)$, then the right-hand side of
equation (\ref{new11.04.1}) is also equal to zero. 
Thus, both sides of equation  (\ref{new11.04.1}) are equal to zero. 

Put $\lambda_1=\lambda_2=\lambda$. The characteristic function
$\hat\lambda(h)$ satisfies  equation (\ref{new11.04.1}) on the group $H$.
Consider $\lambda$ 
as a distribution on the group $X$. Then the characteristic functions  
 $\hat\lambda_j(y)$  satisfy  equation (\ref{11.04.1}) on the group $Y$. 
It follows from (\ref{nz}) that 
$$
(x_1, u+v )(x_2, u+\widetilde\alpha v )=(x_1, u-v )(x_2, u-\widetilde\alpha v ), 
\quad u, v\in Y.
$$
Hence  the characteristic functions $\hat\mu_j(y)=\hat\lambda(y)(x_j, y)$ 
also satisfy equation (\ref{11.04.1}). 
By Lemma  \ref{lem1},  this implies that  the conditional  distribution of the 
linear form $L_2 = \xi_1 + \alpha\xi_2$ given $L_1 = \xi_1 + \xi_2$ is symmetric.

From the above it follows that $(I-\alpha)(G)=G$  and 
statements (i)--(iv) of Theorem \ref{nth1} 
are true for the subgroup $G$, the distributions $\mu_1$, $\mu_2$, 
and $\lambda$.
\end{remark}

\begin{remark} The main part of Theorem \ref{nth1} is the statement  
that the distributions $\mu_j$ are shifts of the same distribution 
$\lambda$ (statement 1(i)).
We will show that statement 1(i) of Theorem \ref{nth1}  is not true for an 
arbitrary compact totally disconnected Abelian group. To prove this we  use 
the following example constructed in \cite[Remark 2]{FeTVP}.

Let $G$ be a compact Abelian group.
Consider the direct product $$X=\mathop \mathbf{P}
\limits_{j=-\infty}^\infty G_j,$$ where $G_j=G$, $j=0, \pm1, \pm2, \dots$ 
If the group $G$ is totally disconnected, so is $X$. 
Denote by $(g_j)_{j=-\infty}^{\infty}$, where $g_j\in G$, 
elements of the group $X$.
Let $\alpha$ be a topological automorphism of $X$ of the form
 $$\alpha(g_j)_{j=-\infty}^{\infty}=
(g_{j-2})_{j=-\infty}^{\infty}, \quad
(g_j)_{j=-\infty}^{\infty} \in X.$$
Consider the subgroups of $X$ of the form
$$
K_1=\{(g_j)_{j=-\infty}^{\infty}\in X:g_1=0\}, \quad 
K_2=\{(g_j)_{j=-\infty}^{\infty}\in X:g_2=0\} 
$$
and the Haar distributions  $m_{K_1}$ and $m_{K_2}$.
It is obvious that there is no distribution $\lambda$ such that 
$m_{K_j}$ are shifts of $\lambda$.
Let $\xi_1$ and
$\xi_2$ be independent random variables with values in $X$ and
distributions  $m_{K_1}$ and $m_{K_2}$.  As has been proven in
\cite[Remark 2]{FeTVP}, the conditional  distribution 
of the linear form $L_2 = \xi_1 + \alpha\xi_2$ given 
$L_1 = \xi_1 + \xi_2$ is symmetric.

\end{remark}

\section{ Heyde theorem for $p$-quasicyclic groups}

We prove in this section an analogue of Heyde's 
theorem for $p$-quasicyclic groups $\mathbb{Z}(p^\infty)$, where $p\ne 2$.

Let $Y$ be an Abelian group, let $f(y)$ be a function on  $Y$, and let $h$ be an
element of $Y$. Denote by $\Delta_h$ the finite difference operator 
$$\Delta_h f(y)=f(y+h)-f(y), \quad y\in Y.$$ A function $f(y)$ on the group 
$Y$ is called
a  polynomial  if
$$\Delta_{h}^{n+1}f(y)=0$$ for some nonnegative integer 
$n$ and all $y, h \in Y$.

\begin{theorem}\label{nth2}  Consider the 
$p$-quasicyclic group $\mathbb{Z}(p^\infty)$, where  
$p\ne 2$. 
Let  $\alpha$ be an automorphism of the group $\mathbb{Z}(p^\infty)$. 
 Let $\xi_1$ and $\xi_2$ be
independent random variables with values in   $\mathbb{Z}(p^\infty)$ 
and distributions
$\mu_1$ and $\mu_2$. Assume that the conditional  distribution of 
the linear form $L_2 = \xi_1 + \alpha\xi_2$ given 
$L_1 = \xi_1 + \xi_2$ is symmetric. 
If $\alpha\ne-I$, then there is a finite subgroup $G$ of the group 
$\mathbb{Z}(p^\infty)$
satisfying the condition $(I-\alpha)(G)=G$  and a distribution  
$\lambda$ supported in $G$ such that 
statements {\rm (i)--(iv)} of Theorem {\rm\ref{nth1}} are true. 
If $\alpha=-I$, then $\mu_1=\mu_2$.
\end{theorem}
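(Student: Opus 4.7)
The plan is to reduce Theorem \ref{nth2} to Theorem \ref{nth1} by establishing, when $\alpha\ne -I$, that the distributions $\mu_j$ are (up to a shift) supported in a finite subgroup of $\mathbb{Z}(p^\infty)$. The case $\alpha=-I$ is dispatched directly: substituting $\widetilde\alpha=-I$ into Heyde's equation (\ref{11.04.1}) on $Y=\mathbb{Z}_p$ and setting $u=v$ gives $\hat\mu_1(2v)=\hat\mu_2(2v)$, and since $p\ne 2$ makes $f_2\in\mathrm{Aut}(\mathbb{Z}_p)$, we conclude $\hat\mu_1=\hat\mu_2$ and hence $\mu_1=\mu_2$.

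Assume $\alpha\ne -I$. I would put $S=\{y\in\mathbb{Z}_p:|\hat\mu_1(y)|=|\hat\mu_2(y)|=1\}$ and $G=A(X,S)$. By Lemma \ref{lem2}, $S$ is a closed subgroup of $\mathbb{Z}_p$; since $f_2\in\mathrm{Aut}(\mathbb{Z}_p)$, $S^{(2)}=S$; and since every subgroup of $\mathbb{Z}(p^\infty)$ is characteristic, $\alpha(G)=G$. Lemma \ref{lem11} then replaces $\mu_j$ by shifts $\lambda_j$ supported in $G$ with the conditional-symmetry property preserved for $\eta_j\in G$ and $\alpha_G$. The proper subgroups of $\mathbb{Z}(p^\infty)$ are exactly the finite cyclic groups $\mathbb{Z}(p^n)$, so once $G\ne X$ is established, $G$ is of the form (\ref{e20.35}) and Theorem \ref{nth1} applied to $G$ delivers statements (i)--(iv).

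To rule out $G=X$, suppose for contradiction $S=\{0\}$. Item~1 of the proof of Theorem \ref{nth1} transfers verbatim and yields $T=\mathrm{Ker}(I-\widetilde\alpha)=\{0\}$, so $\widetilde\alpha\ne I$. Set $f(y)=|\hat\lambda_1(y)|^2$ and $g(y)=|\hat\lambda_2(y)|^2$; these are continuous nonnegative functions on $H=\mathbb{Z}_p$ satisfying equation (\ref{new1}) with $\widetilde\alpha_G=\widetilde\alpha$, and hence also (\ref{11.04.10}) and (\ref{11.04.7}). The key step is to establish equalities (\ref{11.04.14}) and (\ref{11.04.15}) for every $y\in\mathbb{Z}_p$. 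In Theorem \ref{nth1}'s proof this was obtained from a finite-$\kappa$-orbit argument that is no longer available because $\mathbb{Z}_p$ is torsion-free. The remedy is a continuity argument: the automorphism $\kappa=-f_4\widetilde\alpha(I-\widetilde\alpha)^{-2}$ acts with finite order on each quotient $\mathbb{Z}_p/p^k\mathbb{Z}_p$, so for every $y$ there exist $m_k\to\infty$ with $\kappa^{m_k}y\to y$ in $\mathbb{Z}_p$; combining this with the monotone chain (\ref{11.04.17}) from the proof of Lemma \ref{newle1} and the continuity of $f$ squeezes the sequence $f(y)\le f(\kappa y)\le\cdots\le f(\kappa^{m_k}y)\to f(y)$ to a constant, so $f(\kappa y)=f(y)$ for all $y$, and the same holds for $g$. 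Then items~3a and 3b of the proof of Theorem \ref{nth1} are repeated almost verbatim, using that every closed subgroup of $\mathbb{Z}_p$ is $p^k\mathbb{Z}_p$ and therefore characteristic, to conclude that $f(h_1)\ne 0$ implies $h_1\in\mathrm{Ker}(I+\widetilde\alpha)$. Since $\alpha\ne -I$ gives $1+c\ne 0$ in the integral domain $\mathbb{Z}_p$, $\mathrm{Ker}(I+\widetilde\alpha)=\{0\}$; so $f$ vanishes off $0$, contradicting the continuity of $f$ at $0$ where $f(0)=1$. This contradiction forces $G\ne X$ and completes the reduction.

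The main obstacle is precisely the torsion-free character group $\mathbb{Z}_p$: the finite-$\kappa$-orbit step that drives Theorem \ref{nth1}'s argument, which was fuelled by $H$ being a weak product of torsion $p$-groups, must be replaced by a continuity-plus-density argument tailored to $\mathbb{Z}_p$. A secondary technical wrinkle is that $I-\widetilde\alpha$ need not be surjective on $\mathbb{Z}_p$ (it fails precisely when $c_0=1$), so some of the substitutions producing (\ref{11.04.16}) and (\ref{11.04.8}) from (\ref{11.04.10}) and (\ref{11.04.7}) are a priori only valid on the image $(I-\widetilde\alpha)(\mathbb{Z}_p)=p^k\mathbb{Z}_p$; this must be handled with care, but once $\kappa$-invariance of $f$ and $g$ is established the contradiction can be propagated from $p^k\mathbb{Z}_p$ to all of $\mathbb{Z}_p$.
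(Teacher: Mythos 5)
Your overall strategy (dispose of $\alpha=\pm I$ by the substitution $u=v$, then reduce everything else to Theorem \ref{nth1} by showing the distributions live, after a shift, on a proper --- hence finite --- subgroup of $\mathbb{Z}(p^\infty)$) is the same as the paper's, and your treatment of $\alpha=-I$ and $\alpha=I$ is correct. Your continuity-plus-density replacement for the finite-orbit step is also sound as far as it goes: when $\kappa=-f_4\widetilde\alpha(I-\widetilde\alpha)^{-2}$ is a genuine automorphism of $\mathbb{Z}_p$, i.e.\ multiplication by a unit $d$, one indeed has $d^{(p-1)p^k}\to 1$, so $\kappa^{m_k}y\to y$, and the monotone chain plus continuity of $|\hat\lambda_j|^2$ forces $f(\kappa y)=f(y)$.

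The genuine gap is the case $c_0=1$, $\alpha\ne I$ (e.g.\ $\alpha$ acting as multiplication by $1+p$), which you dismiss as a ``secondary technical wrinkle.'' There $e-c\in p\mathbb{Z}_p$, so $(I-\widetilde\alpha)(\mathbb{Z}_p)=p^b\mathbb{Z}_p$ with $b\ge 1$ and $(I-\widetilde\alpha)^{-1}$ does not exist as an endomorphism of $\mathbb{Z}_p$: the map $\kappa$ is multiplication by $p^{-2b}$ times a unit, $\kappa^m$ is defined only on $p^{2bm}\mathbb{Z}_p$, and $\bigcap_m p^{2bm}\mathbb{Z}_p=\{0\}$. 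Consequently neither the inequality $f(y)\le f(\kappa y)$ nor the notion of ``$\kappa$-invariance of $f$'' can even be formulated on $\mathbb{Z}_p$, so your proposed repair (``once $\kappa$-invariance is established the contradiction can be propagated'') is circular --- the whole mechanism of Lemma \ref{newle1} is unavailable, not merely restricted to a subgroup. The paper sidesteps this entirely by a different device: it passes to $\varphi_j=\ln\hat\nu_j$ on a small open subgroup $p^l\mathbb{Z}_p$ where $\hat\nu_j>0$, invokes Lemma \ref{newle18.01.2} to get the finite-difference equations (\ref{e20.31})--(\ref{e20.32}), notes that the admissible increments $(I\pm\widetilde\alpha)k$ and $2k$ all cover $p^{\max\{a,b\}}\mathbb{Z}_p$, so $\Delta_h^3\varphi_j=0$ on $p^m\mathbb{Z}_p$ with $m=l+\max\{a,b\}$, and then uses Lemma \ref{s5.6} (a continuous polynomial on a compact group is constant) to conclude $|\hat\mu_j|\equiv 1$ on $p^m\mathbb{Z}_p$. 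This yields the reduction to the finite group $A(\mathbb{Z}(p^\infty),p^m\mathbb{Z}_p)=\mathbb{Z}(p^m)$ uniformly for all $\alpha\ne\pm I$, including $c_0=1$. You need either this logarithm/polynomial argument or some other idea specific to the non-surjective case; without it your proof covers only $c_0\ne 1$.
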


To prove   Theorem \ref{nth2}  we need two lemmas.
\begin{lemma}[{\!\!\protect\cite[Lemma 9.17]{Febooknew}}] 
\label{newle18.01.2}  
 Let $Y$
be an Abelian group  and let $\beta$ be an automorphism of $Y$.
Assume that the functions $\varphi_j(y)$ satisfy the equation 
\begin{equation}\label{e20.30}
\varphi_1(u+v)+\varphi_2(u+\beta v)-\varphi_1(u-v)-
\varphi_2(u-\beta v)=0,\quad u, v \in
 Y.
\end{equation}
 Then  the function  $\varphi_1(y)$ satisfies the equation 
 \begin{equation}\label{e20.31}
\Delta_{(I-\beta)k_3}\Delta_{2 k_2}\Delta_{(I+\beta)k_1}\varphi_1(y)
= 0,\quad  y\in Y,
\end{equation}
 and the function  $\varphi_2(y)$ satisfies the equation 
\begin{equation}\label{e20.32}
\Delta_{-(I-\beta)k_3}\Delta_{(I+\beta)k_2}\Delta_{2 
\beta k_1}\varphi_2(y) = 0,
 \quad y\in Y,
\end{equation}
 where $k_j$, $j=1, 2, 3$, are arbitrary elements of the group $Y$. 
\end{lemma}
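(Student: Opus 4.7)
Equation \eqref{e20.30} is the functional-equation analogue on a general Abelian group of Heyde's equation on $\mathbb R$, and the conclusion is the finite-difference incarnation of the classical fact that its solutions are polynomials of degree at most $2$. I will mimic the real-line proof (where one differentiates three times in $v$ and exploits $\beta \ne \pm 1$) using finite differences instead of derivatives, working entirely inside $Y$.

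The first move is to produce two ``one-summand-eliminated'' derived equations. Subtracting \eqref{e20.30} at $(u,v)$ from the same equation at $(u+k,v+k)$ kills the $\varphi_1(u-v)$ term and gives
\[
(\Delta_{2k}\varphi_1)(u+v)+(\Delta_{(I+\beta)k}\varphi_2)(u+\beta v)-(\Delta_{(I-\beta)k}\varphi_2)(u-\beta v)=0;
\]
the shift $(u,v)\mapsto(u+\beta k,v+k)$ eliminates $\varphi_2(u-\beta v)$ and produces a companion equation. Setting $v=0$ in the first of these and rewriting $\Delta_{(I-\beta)k}-\Delta_{(I+\beta)k}=-\tau_{(I-\beta)k}\Delta_{2\beta k}$ yields the fundamental exchange identity $\Delta_{2k}\varphi_1(u)=-\Delta_{2\beta k}\varphi_2(u+(I-\beta)k)$, trading a finite difference of one function for one of the other; an analogous identity holds with the roles of $\varphi_1,\varphi_2$ swapped.

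Using the exchange identities to purge $\varphi_2$ from the first derived equation gives, after rearrangement, an equation purely in $\varphi_1$, which I can summarise as
\[
\Delta_{(I-\beta)v}\Delta_{2k_2}\varphi_1(u)=\Delta_{(I-\beta)k_2}\Delta_{2v}\varphi_1\bigl(u-(I+\beta)v\bigr),\qquad u,v,k_2\in Y.
\]
Swapping the labels $v\leftrightarrow k_2$ here and resubstituting yields that the function $\Delta_{(I-\beta)v}\Delta_{2k_2}\varphi_1$ is periodic in $u$ with period $(I+\beta)(v+k_2)$. Separately, specialising $v=\pm k_1$ in the same derived equation and adding produces the symmetric equation $\Psi(u+k)+\Psi(u-k)=\Psi(u+\beta k)+\Psi(u-\beta k)$ for $\Psi:=\Delta_{2k_2}\varphi_1$, which rearranges to $\Delta_{(I+\beta)k}\Delta_{(I-\beta)k}\Psi=0$ for every $k\in Y$.

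Combining these ingredients, I set $v=k_3$ in the periodicity and expand $\Delta_{(I+\beta)(k_2+k_3)}=\Delta_{(I+\beta)k_2}+\Delta_{(I+\beta)k_3}+\Delta_{(I+\beta)k_2}\Delta_{(I+\beta)k_3}$; two of the three resulting terms vanish by $\Delta_{(I+\beta)k_3}\Delta_{(I-\beta)k_3}\Psi=0$, leaving $\Delta_{(I+\beta)k_2}\Delta_{(I-\beta)k_3}\Delta_{2k_2}\varphi_1=0$. A final polarisation step (replacing $k_2$ by $k_1+k_2$ in this identity, expanding both $\Delta_{(I+\beta)(k_1+k_2)}$ and $\Delta_{2(k_1+k_2)}$, and again cancelling the terms already known to vanish) upgrades the conclusion to $\Delta_{(I-\beta)k_3}\Delta_{2k_2}\Delta_{(I+\beta)k_1}\varphi_1=0$ for arbitrary independent $k_1,k_2,k_3$. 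The identity for $\varphi_2$ is proved by the parallel argument starting from the second derived equation. The main obstacle will be this last polarisation: separating the $k_2$ appearing in both the $(I+\beta)$-slot and the $\Delta_{2k_2}$-slot and turning it into an independent parameter $k_1$, which requires the careful interplay of the periodicity, the symmetric equation, and the assumption that $\beta\in\mathrm{Aut}(Y)$ so that the various $\beta$-dependent steps and inverses remain well-defined in $Y$.
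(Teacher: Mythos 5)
There is a genuine gap --- in fact several --- and it is worth noting first that the paper itself does not prove this lemma: it imports it from Lemma 9.17 of \cite{Febooknew}, where it is proved by the short elimination argument sketched below. Your opening move (the two derived equations obtained by shifting $(u,v)$ so as to freeze one term) is exactly the first step of that proof, but everything after it breaks. The decisive flaw is that your first derived equation is \emph{equivalent} to your exchange identity, so ``purging $\varphi_2$'' from it can only return tautologies; your key intermediate equation $\Delta_{(I-\beta)v}\Delta_{2k_2}\varphi_1(u)=\Delta_{(I-\beta)k_2}\Delta_{2v}\varphi_1(u-(I+\beta)v)$ is therefore never derived. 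Concretely, the two $\varphi_2$-terms regroup as
\begin{equation*}
\Delta_{(I+\beta)k}\varphi_2(u+\beta v)-\Delta_{(I-\beta)k}\varphi_2(u-\beta v)
=\Delta_{2\beta(v+k)}\varphi_2\bigl(u-\beta v+(I-\beta)k\bigr)-\Delta_{2\beta v}\varphi_2(u-\beta v),
\end{equation*}
and applying the exchange identity $\Delta_{2\beta m}\varphi_2(w)=-\Delta_{2m}\varphi_1\bigl(w-(I-\beta)m\bigr)$ to both terms collapses the derived equation to $\Delta_{2k}\varphi_1(u+v)=\Delta_{2k}\varphi_1(u+v)$. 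The later steps have the same character: specialising $v=\pm k_1$ and adding does not yield your ``symmetric equation'' for $\Psi=\Delta_{2k_2}\varphi_1$ (the two sides come out with mismatched increments, $\Delta_{2k_2}$ on one side and $\Delta_{(I-\beta)k_2}$ on the other), and the final polarisation, which you yourself flag as the main obstacle, genuinely fails: setting $Q(a,b)=\Delta_{(I+\beta)a}\Delta_{(I-\beta)k_3}\Delta_{2b}\varphi_1$ and expanding $Q(k_1+k_2,k_1+k_2)=0$, seven of the nine terms vanish because of $Q(a,a)=0$, and what survives is $Q(k_1,k_2)+Q(k_2,k_1)=0$ --- antisymmetry, not the required vanishing.

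The missing idea is simply to iterate the one move you already made, instead of detouring through exchange identities, periodicity and polarisation. Shifting $(u,v)\mapsto(u+\beta k_1,v+k_1)$ in (\ref{e20.30}) freezes $u-\beta v$ and gives your companion equation
\begin{equation*}
\Delta_{(I+\beta)k_1}\varphi_1(u+v)+\Delta_{2\beta k_1}\varphi_2(u+\beta v)-\Delta_{(\beta-I)k_1}\varphi_1(u-v)=0.
\end{equation*}
Now shift $(u,v)\mapsto(u+k_2,v+k_2)$, which freezes $u-v$, to get
$\Delta_{2k_2}\Delta_{(I+\beta)k_1}\varphi_1(u+v)+\Delta_{(I+\beta)k_2}\Delta_{2\beta k_1}\varphi_2(u+\beta v)=0$;
then shift $(u,v)\mapsto(u+\beta k_3,v-k_3)$, which freezes $u+\beta v$, to get
$\Delta_{(\beta-I)k_3}\Delta_{2k_2}\Delta_{(I+\beta)k_1}\varphi_1(u+v)=0$. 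Since $u+v$ runs over all of $Y$ and $k_3$ is arbitrary, this is (\ref{e20.31}). Likewise, starting again from the companion equation and freezing $u+v$ via $(u,v)\mapsto(u+k_2,v-k_2)$, then $u-v$ via $(u,v)\mapsto(u+k_3,v+k_3)$, leaves $\Delta_{(I+\beta)k_3}\Delta_{(I-\beta)k_2}\Delta_{2\beta k_1}\varphi_2(u+\beta v)=0$, which is (\ref{e20.32}) after relabelling. This settles the lemma in a few lines, uses only that $\beta$ is an endomorphism (no inverses of $\beta$ or $I\pm\beta$, no division by $2$), and is the proof behind the citation.
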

The following lemma is well known. For the proof see, e.g., \cite[Proposition 1.30]{Febooknew}.
\begin{lemma}
\label{s5.6} 
	Let $Y$
	be a  compact Abelian group   and let $f(y)$ be a continuous
	polynomial on $Y$. Then $f(y)=\text{const}$
		for all $y\in Y$.
\end{lemma}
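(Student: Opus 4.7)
The character group of $\mathbb{Z}(p^\infty)$ is $\mathbb{Z}_p$, which is compact, and $\widetilde\alpha \in \mathrm{Aut}(\mathbb{Z}_p)$ corresponds to multiplication by some $c \in \mathbb{Z}_p^\times$. By Lemma~\ref{lem1}, the characteristic functions $\hat\mu_j$ satisfy Heyde's equation (\ref{11.04.1}) on $\mathbb{Z}_p$. If $\alpha = -I$ (so $c = -1$), substituting $u = v$ in (\ref{11.04.1}) yields $\hat\mu_1(2u) = \hat\mu_2(2u)$; since $p \ne 2$ gives $f_2 \in \mathrm{Aut}(\mathbb{Z}_p)$, it follows that $\hat\mu_1 = \hat\mu_2$, hence $\mu_1 = \mu_2$. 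For the rest of the proof, assume $\alpha \ne -I$.

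The plan is first to show that the closed subgroup $S = \{y \in \mathbb{Z}_p : |\hat\mu_1(y)| = |\hat\mu_2(y)| = 1\}$ (a subgroup by Lemma~\ref{lem2}) is nontrivial. Once this is established, since every nonzero closed subgroup of $\mathbb{Z}_p$ has the form $p^k\mathbb{Z}_p$ for some finite $k \ge 0$, the annihilator $G_0 := A(\mathbb{Z}(p^\infty), S)$ is isomorphic to $\mathbb{Z}(p^k)$, a finite cyclic subgroup of $\mathbb{Z}(p^\infty)$ of odd order. Moreover $S^{(2)} = S$ because $p \ne 2$, and $\alpha(G_0) = G_0$ since every subgroup of $\mathbb{Z}(p^\infty)$ is characteristic. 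By Lemma~\ref{lem11} there exist shifts $\lambda_j$ of $\mu_j$ supported on $G_0$ and still satisfying the symmetric conditional distribution hypothesis, and Corollary~\ref{a1} applied to these $\lambda_j$ on the finite cyclic group $G_0$ then produces the desired subgroup $G \subseteq G_0$ and common distribution $\lambda$ on $G$ satisfying statements (i)--(iv) of Theorem~\ref{nth1}.

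It remains to establish $S \ne \{0\}$. If $\alpha = I$ (so $c = 1$), setting $u = v$ in (\ref{11.04.1}) gives $(\hat\mu_1\hat\mu_2)(2u) = 1$, so $\hat\mu_1\hat\mu_2 \equiv 1$ on $\mathbb{Z}_p$; combined with $|\hat\mu_j| \le 1$, this yields $|\hat\mu_j| \equiv 1$ and hence $S = \mathbb{Z}_p$. Now suppose $\alpha \ne \pm I$, and put $\nu_j = \mu_j * \bar\mu_j$, so that $\hat\nu_j = |\hat\mu_j|^2$ is continuous and nonnegative on $\mathbb{Z}_p$, satisfies Heyde's equation, and $\hat\nu_j(0) = 1$. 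Since the open set $\{y : \hat\nu_j(y) > 0\}$ contains $0$ and the subgroups $p^l\mathbb{Z}_p$ form a basis at $0$, there is an open subgroup $K = p^l\mathbb{Z}_p$ on which both $\hat\nu_j$ are strictly positive; on $K$ the continuous functions $\varphi_j := \log \hat\nu_j$ satisfy equation (\ref{e20.30}) with $\beta = c|_K \in \mathrm{Aut}(K)$, obtained by taking logarithms of Heyde's equation. Lemma~\ref{newle18.01.2} then gives the polynomial-type identities (\ref{e20.31}) and (\ref{e20.32}) on $K$. Since $c \ne \pm 1$, both $1 \pm c$ are nonzero in $\mathbb{Z}_p$, so one can choose $m \ge 0$ with $p^m\mathbb{Z}_p \subseteq (I+c)\mathbb{Z}_p \cap (I-c)\mathbb{Z}_p$; for any $h$ in the open subgroup $K' := p^{l+m}\mathbb{Z}_p$ one can then solve $(I+c)k_1 = 2k_2 = (I-c)k_3 = h$ with $k_i \in K$ (using also $2 \in \mathbb{Z}_p^\times$), obtaining $\Delta_h^3 \varphi_j(y) = 0$ for all $h, y \in K'$. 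Therefore $\varphi_j|_{K'}$ is a continuous polynomial on the compact group $K'$, and Lemma~\ref{s5.6} forces it to be constant; since $\varphi_j(0) = 0$, we conclude $\varphi_j \equiv 0$ on $K'$, so $|\hat\mu_j| \equiv 1$ on $K'$, giving $\{0\} \ne K' \subseteq S$. The main technical obstacle is precisely this polynomial argument: translating the mixed-direction identity of Lemma~\ref{newle18.01.2} into the single-direction form $\Delta_h^{n+1}\varphi = 0$ needed to invoke Lemma~\ref{s5.6} forces one to pass to the smaller open subgroup $K'$ and uses critically that $\alpha \ne \pm I$, ensuring that $(I \pm c)\mathbb{Z}_p$ has finite index so $K'$ is nontrivial.
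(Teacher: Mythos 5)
Your proposal does not prove the statement at hand. The statement is Lemma~\ref{s5.6}: every continuous polynomial $f$ (in the finite-difference sense, $\Delta_h^{n+1}f\equiv 0$) on a compact Abelian group $Y$ is constant. What you have written instead is an argument for Theorem~\ref{nth2}, the Heyde theorem on $\mathbb{Z}(p^\infty)$ --- the case analysis over $\alpha=\pm I$, the reduction via Lemma~\ref{lem11} to a finite cyclic subgroup, and the passage to $\varphi_j=\log\hat\nu_j$ are all steps of that theorem's proof, not of the lemma. Worse, your argument explicitly invokes Lemma~\ref{s5.6} itself (``Lemma~\ref{s5.6} forces it to be constant''), so as a proof of Lemma~\ref{s5.6} it is circular: the lemma appears as an ingredient, never as a conclusion. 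The paper, for its part, does not reprove the lemma either; it cites it as well known (Proposition 1.30 of the monograph \cite{Febooknew}), which is legitimate for the paper but not for a blind proof attempt whose task was precisely to supply the argument.

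For the record, a self-contained proof is short. Since $\Delta_h^{n+1}f(y)=0$ for all $y,h\in Y$, Newton's forward-difference formula gives, for every integer $m\ge 0$,
\begin{equation*}
f(y+mh)=\sum_{k=0}^{n}\binom{m}{k}\,\Delta_h^k f(y).
\end{equation*}
The left-hand side is bounded in $m$ because $f$ is continuous on the compact group $Y$, while $\binom{m}{k}$ grows like $m^k$; taking the largest $k$ with $\Delta_h^k f(y)\ne 0$ and letting $m\to\infty$ forces $\Delta_h^k f(y)=0$ for every $k\ge 1$. In particular $\Delta_h f(y)=f(y+h)-f(y)=0$ for all $y,h\in Y$, i.e., $f$ is constant. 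Note that this argument uses only boundedness of $f$, so continuity enters solely through compactness of $Y$; that is the one structural point your proposal never engages with.
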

\noindent\textit{Proof of Theorem} \ref{nth2}.
Since the character group of the group $\mathbb{Z}(p^\infty)$ is topologically 
isomorphic to the group of $p$-adic integers
$\mathbb{Z}_p$, to avoid introducing additional notation,  we 
assume that the character group of $\mathbb{Z}(p^\infty)$ is $\mathbb{Z}_p$.
 By Lemma \ref{lem1},
the characteristic functions $\hat\mu_j(y)$   satisfy equation (\ref{11.04.1}).

Put   $\nu_j=\mu_j*\bar\mu_j$, $j=1, 2$. Then 
$\hat\nu_j(y)=|\hat\mu_j(y)|^2\ge 0$  for all $y\in \mathbb{Z}_p$ 
and the characteristic functions   $\hat\nu_j(y)$ also satisfy equation   
(\ref{11.04.1}). Taking into account that   the family of the subgroups
 $\{p^k\mathbb{Z}_p\}_{k=0}^{\infty}$ forms an open 
 basis at the zero of
 the group $\mathbb{Z}_p$,  we can choose a nonnegative integer 
 $l$ in such a way that 
$\hat\nu_j(y)>0$ for all
$y\in p^l\mathbb{Z}_p$, $j=1, 2$. Put $\varphi_j(y)=\ln\hat\nu_j(y)$, 
$y\in p^l\mathbb{Z}_p$.
Inasmuch as the characteristic functions   $\hat\nu_j(y)$ satisfy equation   
(\ref{11.04.1}), the functions $\varphi_j(y)$ satisfy equation 
(\ref{e20.30}), where   $Y=p^l\mathbb{Z}_p$ and
$\beta=\widetilde\alpha$.  By Lemma \ref{newle18.01.2},  the function 
$\varphi_1(y)$ satisfies equation (\ref{e20.31}) and
the function 
$\varphi_2(y)$ satisfies equation (\ref{e20.32}).

Assume that $\alpha$ corresponds to an element 
$c=(c_0, c_1, \dots, c_n,\dots)
\in\mathbb{Z}_p^\times$.
It is obvious that $I$ corresponds to the element $e=(1, 0, \dots, 0,\dots)$.

Let $\alpha\ne\pm I$. 
We have 
 $(e+c)\mathbb{Z}_p=p^a\mathbb{Z}_p$, 
$(e-c)\mathbb{Z}_p=p^b\mathbb{Z}_p$, where
$a$, $b$ are nonnegative integers. Set $m=l+\max\{a, b\}$.  
The multiplication by 2 is a topological automorphism of 
the group $\mathbb{Z}_p$ because $p\ne 2$.
  It follows from (\ref{e20.31})  
and (\ref{e20.32}) that  each of the functions 
$\varphi_j(y)$ on the subgroup $p^m\mathbb{Z}_p$ satisfies the equation
$$
\Delta_h^3\varphi_j(y) = 0,
\quad y, h\in p^m\mathbb{Z}_p, \ j=1, 2.
$$
Thus,  the functions $\varphi_j(y)$ are continuous polynomials on the group 
$p^m\mathbb{Z}_p$. Since $p^m\mathbb{Z}_p$ is a compact Abelian group, by Lemma \ref{s5.6},
the functions $\varphi_j(y)$ are constants. As far as  $\varphi_1(0)=\varphi_2(0)=0$, we have
$\varphi_1(y)=\varphi_2(y)=0$ for all $y\in p^m\mathbb{Z}_p$. This implies that 
$|\hat\mu_1(y)|=|\hat\mu_2(y)|=1$ for all $y\in p^m\mathbb{Z}_p$.
Put   $K=A(\mathbb{Z}(p^\infty), p^m\mathbb{Z}_p)$. 
By Lemma \ref{lem11},  we can  replace  the distributions  $\mu_j$ by 
their shifts  
$\rho_j$   in such a way that   $\rho_j$ are supported in $K$,
and   if   
$\zeta_j$
are independent random variables with values in
the group $K$  and distributions $\rho_j$, then the conditional distribution 
of the linear form $N_2=\zeta_1 + \alpha_K\zeta_2$ given $N_1=\zeta_1 + \zeta_2$  
is symmetric. It is easy to see that   $K=\mathbb{Z}(p^m)$  
and hence $K$ is a group of the form (\ref{e20.35}). 
 The group $K$, the automorphism $\alpha_K$, 
the independent random variables $\zeta_j$, and the 
distributions $\rho_j$
 satisfy all conditions of Theorem \ref{nth1}.
The assertion of the theorem follows from Theorem \ref{nth1}. 

Let $\alpha=I$. Substituting $u=v=y$ in equation (\ref{11.04.1}), we get
$\hat\mu_1(2y)\hat\mu_2(2y)=1$ for all $y\in \mathbb{Z}_p$. Since 
 the multiplication by 2 is a topological automorphism of 
the group $\mathbb{Z}_p$, this implies that 
$|\hat\mu_1(y)|=|\hat\mu_2(y)|=1$ for all $y\in \mathbb{Z}_p$. 
Hence $\mu_j$ are degenerate
distributions and $G=\{0\}$. 

Let $\alpha=-I$. Substituting $u=v=y$ in equation (\ref{11.04.1}) we get
$\hat\mu_1(2y)=\hat\mu_2(2y)$  and hence 
$\hat\mu_1(y)=\hat\mu_2(y)$ for all $y\in \mathbb{Z}_p$. This implies that
$\mu_1=\mu_2$. 

We note that the reasoning given in Remark \ref{re1} shows that Theorem  \ref{nth2} can not be 
strengthened by narrowing the class of distributions which are 
characterized by the symmetry of the conditional distribution of 
one linear form given another. 
$\hfill\Box$
 
\begin{corollary}\label{mco1} 
Consider the 
$p$-quasicyclic group $\mathbb{Z}(p^\infty)$, where  
$p\ne 2$.  
Let  $\alpha=(c_0, c_1,\dots, c_n,\dots)$, where $c_0\ne p-1$,
 be an   automorphism of the group $\mathbb{Z}(p^\infty)$. 
 Let $\xi_1$ and $\xi_2$ be
independent random variables with values in   $\mathbb{Z}(p^\infty)$ 
and distributions
$\mu_1$ and $\mu_2$. Assume that the conditional  distribution 
of the linear form $L_2 = \xi_1 + \alpha\xi_2$ given 
$L_1 = \xi_1 + \xi_2$ is symmetric. 
Then there is a finite subgroup $G$ of the group $\mathbb{Z}(p^\infty)$
 such that $\mu_j$ are shifts of $m_G$. Furthermore, if $c_0=1$, then $G=\{0\}$, i.e. 
$\mu_j$ are degenerate distributions.
 \end{corollary}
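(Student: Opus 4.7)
The plan is to mimic the proof of Corollary \ref{mmco1} almost verbatim, with Theorem \ref{nth2} playing the role that Theorem \ref{nth1} played there. The main point is that the hypothesis $c_0 \ne p-1$ is exactly what makes $I+\alpha$ an automorphism: the element $c+e$ has zeroth coordinate $c_0+1$, which is nonzero modulo $p$ precisely when $c_0 \ne p-1$, so $c+e \in \mathbb{Z}_p^\times$ and hence $I+\alpha \in \mathrm{Aut}(\mathbb{Z}(p^\infty))$. The same hypothesis also rules out $\alpha=-I$, since $-I$ corresponds to $c=(p-1,p-1,\dots)$, so Theorem \ref{nth2} applies in its nontrivial form.

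Given this, I would proceed as follows. By Theorem \ref{nth2}, there exist a finite subgroup $G$ of $\mathbb{Z}(p^\infty)$ with $(I-\alpha)(G)=G$ and a distribution $\lambda$ supported in $G$ such that statements (i)--(iv) of Theorem \ref{nth1} hold; in particular, $\mu_j$ are shifts of $\lambda$, and $m_{(I+\alpha)(G)}$ is a factor of $\lambda$. Because every subgroup of $\mathbb{Z}(p^\infty)$ is characteristic and $I+\alpha \in \mathrm{Aut}(\mathbb{Z}(p^\infty))$, we get $(I+\alpha)(G)=G$, so $m_{(I+\alpha)(G)}=m_G$. Writing $\lambda = m_G * \lambda'$ and using that $\lambda$ is supported in $G$, the absorbing property of Haar measure on $G$ gives $\lambda = m_G$. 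This yields the first assertion.

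For the second assertion, I would analyze which finite subgroups can satisfy $(I-\alpha)(G)=G$ when $c_0=1$. In this case $c-e$ has zeroth coordinate $0$, i.e.\ $c-e \in p\mathbb{Z}_p$, so multiplication by $c-e$ sends $\mathbb{Z}(p^n)$ into $\mathbb{Z}(p^{n-1})$, a proper subgroup whenever $n\ge 1$. Hence $(I-\alpha)(\mathbb{Z}(p^n)) \subsetneq \mathbb{Z}(p^n)$ for $n\ge 1$, and the only finite subgroup $G$ of $\mathbb{Z}(p^\infty)$ satisfying $(I-\alpha)(G)=G$ is $G=\{0\}$, forcing $\mu_j$ to be degenerate.

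There is no real obstacle here since the heavy lifting is done by Theorem \ref{nth2}; the only minor care needed is in the descent argument for $c_0=1$, where one must verify that multiplication by an element of $p\mathbb{Z}_p$ is a strictly order-decreasing operation on every nontrivial $\mathbb{Z}(p^n)$.
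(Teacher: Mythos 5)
Your proposal is correct and follows essentially the same route as the paper, which simply says the proof repeats that of Corollary \ref{mmco1} with Theorem \ref{nth2} in place of Theorem \ref{nth1}: the hypothesis $c_0\ne p-1$ gives $I+\alpha\in\mathrm{Aut}(\mathbb{Z}(p^\infty))$ (and rules out $\alpha=-I$), so $(I+\alpha)(G)=G$ and $\lambda=m_G$, while for $c_0=1$ the condition $(I-\alpha)(G)=G$ forces $G=\{0\}$. Your explicit verification of the descent step for $c_0=1$ is a welcome elaboration of a point the paper leaves implicit, but it is not a different argument.
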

 
 \begin{proof} The proof repeats the proof of Corollary \ref{mmco1}, 
 where instead of Theorem \ref{nth1} we use Theorem \ref{nth2}.
\end{proof} 

Let ${\mathcal P}$ be a set of pairwise distinct prime numbers such that 
$2\notin {\mathcal P}$. Based on the proofs of Theorems \ref{nth1} and \ref{nth2}, we prove 
in the final part of the article a similar statement for the  direct 
product of a group of the form (\ref{e20.35})
 and the $p$-quasicyclic group $\mathbb{Z}(p^\infty)$, 
provided that $p\ne 2$ and $p\notin {\mathcal P}$.

Let $X=K\times\mathbb{Z}(p^\infty)$, where $K$ is a group of the form 
(\ref{e20.35}) and $p\notin {\mathcal P}$.
Denote by
$(k, a)$, where $k\in K$, $a\in\mathbb{Z}(p^\infty)$, 
elements
of the group  $X$. Let $\alpha\in\mathrm{Aut}(X)$. It is obvious that
 $\alpha$ acts as follows:   $\alpha(k, a)=(\alpha_{K}k, \alpha_{\mathbb{Z}(p^\infty)}a)$. 
 We will write $\alpha$ in the form $\alpha=(\alpha_{K}, 
 \alpha_{\mathbb{Z}(p^\infty)})$.
Denote by $Y$ the character group of the group $X$ and by $L$ the character group of 
the group $K$. The group $Y$ is 
topologically isomorphic to the group $L\times\mathbb{Z}_p$.
Denote by $(l, b)$, where $l\in L$, $b\in\mathbb{Z}_p$, elements
of the group  $Y$. Then 
$\widetilde\alpha(l, b)=(\widetilde\alpha_Kl, \widetilde\alpha_{\mathbb{Z}(p^\infty)}b)$.
We will write $\widetilde\alpha$ in the form $\widetilde\alpha=(\widetilde\alpha_{K}, 
 \widetilde\alpha_{\mathbb{Z}(p^\infty)})$.
\begin{theorem}\label{nth3}  Let  $X=K\times\mathbb{Z}(p^\infty)$, 
where $K$ is a group of the form $(\ref{e20.35})$, $p\ne 2$, and $p\notin {\mathcal P}$. 
Let  $\alpha=(\alpha_{K}, \alpha_{\mathbb{Z}(p^\infty)})$ 
be a topological automorphism of the group $X$.  Let $\xi_1$ and $\xi_2$ be
independent random variables with values in   $X$ 
and distributions
$\mu_1$ and $\mu_2$. Assume that the conditional  distribution of 
the linear form $L_2 = \xi_1 + \alpha\xi_2$ given 
$L_1 = \xi_1 + \xi_2$ is symmetric. 
Then there is a closed subgroup $G$ of the group $X$ satisfying the 
conditions: 
\renewcommand{\labelenumi}{\rm(\Roman{enumi})}
\begin{enumerate}
\item
$(I-\alpha)(G)=G$;
	\item
$(I+\alpha)(G)$ is a compact subgroup, 
\end{enumerate} 
and a distribution $\lambda$
supported in $G$ 
such that statements {\rm (i)--(iv)} of Theorem {\rm\ref{nth1}} are true. 
Moreover, if $\alpha_{\mathbb{Z}(p^\infty)}\ne - I$, then the subgroup 
$G$ is compact, and if
$\alpha_{\mathbb{Z}(p^\infty)}=-I$, then either $G$ is   compact  
or 
$G=M\times\mathbb{Z}(p^\infty)$, 
where $M$ is a compact subgroup of  $K$.
\end{theorem}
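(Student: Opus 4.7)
The plan is to combine the scheme of Theorem \ref{nth1} with the polynomial argument from Theorem \ref{nth2} that handles the $\mathbb{Z}(p^\infty)$-factor. A useful preliminary observation is that, since $K$ is $\mathcal{P}$-torsion while $\mathbb{Z}(p^\infty)$ is $p$-torsion with $p\notin\mathcal{P}$, every continuous homomorphism between $K$ and $\mathbb{Z}(p^\infty)$ (in either direction) is zero. Consequently, every closed subgroup of $X$ splits as a direct product $M\times N$ with $M\subset K$ closed and $N\subset\mathbb{Z}(p^\infty)$, and every such subgroup is characteristic; the analogous statement holds for $Y=L\times\mathbb{Z}_p$ and for $\hat M\times\mathbb{Z}_p$. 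Following item 1 of the proof of Theorem \ref{nth1}, put $S=\{y\in Y:|\hat\mu_1(y)|=|\hat\mu_2(y)|=1\}$; since $f_2\in\mathrm{Aut}(Y)$ one has $S^{(2)}=S$, and $G=A(X,S)=M\times N$ is characteristic. Lemma \ref{lem11} produces shifts $\lambda_j$ of $\mu_j$ supported in $G$ that satisfy Heyde's equation with $\alpha_G$, and $\{h\in\hat G:|\hat\lambda_1(h)|=|\hat\lambda_2(h)|=1\}=\{0\}$. If $N=\mathbb{Z}(p^n)$ is finite, then $G$ is compact and of the form $(\ref{e20.35})$, so Theorem \ref{nth1} applies verbatim and delivers all required conclusions.

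In the remaining case $N=\mathbb{Z}(p^\infty)$, I claim $\alpha_{\mathbb{Z}(p^\infty)}=-I$. The projections $\lambda_j^{(p)}$ of $\lambda_j$ onto $\mathbb{Z}(p^\infty)$ are independent and inherit the conditional symmetry, so Theorem \ref{nth2} applies: were $\alpha_{\mathbb{Z}(p^\infty)}\ne -I$, the $\lambda_j^{(p)}$ would be shifts of a distribution supported in some finite subgroup $\mathbb{Z}(p^{k_0})$, forcing $|\hat\lambda_j(0,b)|=1$ for every $b$ in the annihilator $p^{k_0}\mathbb{Z}_p\ne\{0\}$, contradicting the equality just established. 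Hence $\alpha_{\mathbb{Z}(p^\infty)}=-I$, and then $(I+\alpha)(G)=(I+\alpha_M)(M)\times\{0\}$ is compact as a continuous image of a compact group, which yields (II).

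I now carry out the remaining items of the proof of Theorem \ref{nth1}. The verification that $(I-\alpha)(G)=G$ goes through unchanged since $f_2\in\mathrm{Aut}(\hat G)$. To apply Lemma \ref{newle1}, note that the endomorphism $\kappa=-f_4\widetilde\alpha_G(I-\widetilde\alpha_G)^{-2}$ acts on the $\mathbb{Z}_p$-factor as $-4\cdot(-I)\cdot(2I)^{-2}=I$, so $\kappa^m(h_M,h_p)=(\kappa_M^m h_M,h_p)$, and since $\hat M$ is of the form $(\ref{1e20.35})$ there exists $m$ with $\kappa_M^m h_M=h_M$; thus Lemma \ref{newle1} applies at every $h\in\hat G$. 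In item 3 of the proof of Theorem \ref{nth1}, the elements $l_1=-(I+\widetilde\alpha_G)(I-\widetilde\alpha_G)^{-1}h_1$ and $l_2=(I+\widetilde\alpha_G)(I-\widetilde\alpha_G)^{-1}h_2$ have vanishing $\mathbb{Z}_p$-component (because $I+\widetilde\alpha_{\mathbb{Z}(p^\infty)}=0$), so they lie in $\hat M\times\{0\}$ and the analysis reduces to one inside $\hat M$, where all closed subgroups are characteristic; this gives that $\hat\lambda_j$ is supported in $\mathrm{Ker}(I+\widetilde\alpha_G)=\mathrm{Ker}(I+\widetilde\alpha_M)\times\mathbb{Z}_p$. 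Items 4 and 5 then run verbatim and yield $\lambda_1=\lambda_2=\lambda$ together with statements (i)--(iv).

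The main obstacle is the appearance of a $\mathbb{Z}_p$-factor in $\hat G$ when $N=\mathbb{Z}(p^\infty)$: the scheme of Theorem \ref{nth1} relies crucially on every element of $\hat G$ having a finite $\kappa$-orbit, which would fail on an arbitrary $\mathbb{Z}_p$-component. The device that circumvents this is the dichotomy of the second paragraph---in the only subcase where a $\mathbb{Z}_p$-factor can survive, Theorem \ref{nth2} forces $\alpha_{\mathbb{Z}(p^\infty)}=-I$, which makes $\kappa$ act as the identity on $\mathbb{Z}_p$ and pushes the derived elements $l_1,l_2$ of item 3 into $\hat M\times\{0\}$, reducing everything to the already-solved setting.
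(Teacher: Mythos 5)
Your proof is correct, and in its core --- the case where $G$ fails to be compact --- it coincides with the paper's argument: there too one shows $T=\mathrm{Ker}(I-\widetilde\alpha_G)=\{0\}$, observes that $\kappa=(-f_4\widetilde\alpha_M(I-\widetilde\alpha_M)^{-2},\,I)$ acts as the identity on the $\mathbb{Z}_p$-factor so that every $h\in H$ has a finite $\kappa$-orbit, and notes that the elements $u_1,u_2$ of item 3 have zero $\mathbb{Z}_p$-component, which pushes the whole analysis into the character group of $M$. Where you genuinely diverge is in how the compact cases are dispatched. The paper splits on $\alpha_{\mathbb{Z}(p^\infty)}$ at the outset: for $\alpha_{\mathbb{Z}(p^\infty)}\ne\pm I$ it re-runs the polynomial argument of the proof of Theorem \ref{nth2} (via Lemmas \ref{newle18.01.2} and \ref{s5.6}) to get $|\hat\mu_j(0,b)|=1$ on some $p^m\mathbb{Z}_p$, for $\alpha_{\mathbb{Z}(p^\infty)}=I$ it substitutes $u=v$ directly, and only then reduces via Lemma \ref{lem11} to a group of the form (\ref{e20.35}). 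You instead form $G=A(X,S)$ first and, in the only subcase where the $\mathbb{Z}(p^\infty)$-component survives, invoke Theorem \ref{nth2} as a black box on the marginal distributions to force $\alpha_{\mathbb{Z}(p^\infty)}=-I$; this is a clean shortcut that avoids repeating the polynomial computation, at the (small) cost of justifying that conditional symmetry and independence pass to the projections --- which they do, either by pushing forward the identity in distribution under the projection or by restricting Heyde's functional equation to the $\widetilde\alpha$-invariant subgroup $\{0\}\times\mathbb{Z}_p$. Both routes establish the same dichotomy and the same ``Moreover'' clause; yours is marginally more economical, the paper's more self-contained at that step.
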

\begin{proof}
Assume that $\alpha_{\mathbb{Z}(p^\infty)}\ne \pm I$.   Arguing as in 
the proof of Theorem \ref{nth2} in the case when $\alpha\ne \pm I$, 
we prove that there is a nonnegative integer $m$ such that 
$|\hat\mu_1(0, b)|=|\hat\mu_2(0, b)|=1$ for all $b\in p^m\mathbb{Z}_p$.
Applying Lemma \ref{lem11}, we reduce the
proof of the theorem to the case when independent random variables 
take values
in  the group
$A(K\times\mathbb{Z}(p^\infty), p^m\mathbb{Z}_p)=
K\times\mathbb{Z}(p^m)$. Since $p\ne 2$ and $p\notin {\mathcal P}$, 
the group $K\times\mathbb{Z}(p^m)$ is  
a group of the form (\ref{e20.35}).  
The statement of the theorem follows from Theorem \ref{nth1}
applied to the group $K\times\mathbb{Z}(p^m)$.

Suppose that $\alpha_{\mathbb{Z}(p^\infty)}=I$. By Lemma \ref{lem1}, 
the characteristic functions $\hat\mu_j(l, b)$   
satisfy equation (\ref{11.04.1})
which takes the form
\begin{multline}\label{29nn11.04.1}
\hat\mu_1(l_1+l_2, b_1+b_2)\hat\mu_2(l_1+\widetilde\alpha_{\mathbb{Z}(n)}l_2, 
b_1+b_2)\\=
\hat\mu_1(l_1-l_2, b_1-b_2)\hat\mu_2(l_1-\widetilde\alpha_{\mathbb{Z}(n)}l_2, 
b_1-b_2), \quad l_j\in L, \ b_j \in \mathbb{Z}_p.
\end{multline} 
Putting $l_1=l_2=0$, $b_1=b_2=b$ in equation (\ref{29nn11.04.1}), we get
\begin{equation}\label{29nnn11.04.1}
\hat\mu_1(0, 2b)\hat\mu_2(0, 2b)=1, \quad 
 b  \in \mathbb{Z}_p.
\end{equation}
Inasmuch as the multiplication by 2 is a topological automorphism of 
the group $\mathbb{Z}_p$, we obtain from (\ref{29nnn11.04.1}) that 
\begin{equation}\label{30nnn11.04.1}
|\hat\mu_1(0, b)|=|\hat\mu_2(0, b)|=1, \quad 
 b  \in \mathbb{Z}_p.
\end{equation}
In view of (\ref{30nnn11.04.1}), we can apply Lemma  \ref{lem11} and reduce
 the proof of the theorem to the case when independent random variables take 
 values
in  the group  
$A(K\times\mathbb{Z}(p^\infty), \mathbb{Z}_p)=K$. 
The statement of the theorem follows from Theorem \ref{nth1}
applied to the group $K$.

Thus, if $\alpha_{\mathbb{Z}(p^\infty)}\ne -I$, then the theorem is proved.
In this case the subgroup $G$ is compact.

It remains to prove the theorem in the case when $\alpha_{\mathbb{Z}(p^\infty)}=-I$.
Put 
$$
S=\{(l, b)\in L\times\mathbb{Z}_p:|\hat\mu_1(l, b)|=
|\hat\mu_2(l, b)|=1\}, \quad G=A(K\times\mathbb{Z}(p^\infty), S).
$$
We note that  the multiplication by 2 is a topological automorphism of 
any closed subgroup  of the groups $K\times\mathbb{Z}(p^\infty)$ 
and $L\times\mathbb{Z}_p$. 
 This implies that $S^{(2)}=S$. Applying Lemma \ref{lem11},  we 
replace  the distributions  $\mu_j$ by their shifts  
$\lambda_j$   in such a way that  $\lambda_j$ are supported in $G$, 
and   if   $\eta_j$
are independent random variables with values in
the group $G$  and distributions $\lambda_j$, then the conditional 
distribution of the linear form $M_2=\eta_1 + \alpha_G\eta_2$ given 
$M_1=\eta_1 + \eta_2$  is symmetric. Denote by $H$ the character 
group of the group $G$. Considering $\lambda_j$ as distributions on $G$,
we have
\begin{equation}\label{16.08.1}
\{h\in H:|\hat\lambda_1(h)|=|\hat\lambda_2(h)|=1\}=\{0\}.
\end{equation}
If $G$ is a compact group, then $G$ 
is topologically isomorphic to
a group of the form (\ref{e20.35}). In view of  (\ref{16.08.1}),  
the statement of the theorem follows from Corollary \ref{nnco1} applied to 
the group $G$. 

Assume that $G$ is not a compact group. Then $G=M\times\mathbb{Z}(p^\infty)$, 
where $M$ is a compact subgroup of  $K$. 
Next, we follow the scheme of the proof of Theorem \ref{nth1}.
Denote by $N$ the character group of 
the group $M$. Then $H$ is topologically isomorphic to the group
$N\times\mathbb{Z}_p$.  To avoid introducing additional notation, we  
 suppose that $H=N\times\mathbb{Z}_p$.  
 Denote by $(n, b)$, where $n\in N$, $b\in\mathbb{Z}_p$, elements
of the group  $H$.
 Putting $T=\mathrm{Ker}(I-\widetilde\alpha_G)$ 
 and arguing as in item 1 of the proof of Theorem \ref{nth1}, 
 we get that $T=\{0\}$. This implies that $(I-\alpha)(G)=G$.
Since $\alpha_G=(\alpha_M, -I)$ and $M$ is a group of the form (\ref{e20.35}), 
it follows from 
 $(I-\alpha_G)(G)=(I-\alpha)(G)=G$  that 
$I-\alpha_G\in\mathrm{Aut}(G)$ and hence
$I-\widetilde\alpha_G\in \mathrm{Aut}(H)$.

  Put   $\nu_j=\lambda_j*\bar\lambda_j$, $j=1, 2$, and $f(h)=\hat\nu_1(h)$, 
$g(h)=\hat\nu_2(h)$, $h\in H$. By Lemma \ref{lem1}, 
the characteristic functions $f(h)$   and $g(h)$
satisfy equation (\ref{new1}).
Inasmuch as $\alpha_G=(\alpha_M, -I)$, we have 
$\widetilde\alpha_G=(\widetilde\alpha_M, -I)$ and hence
$$
\kappa=-f_4\widetilde\alpha_G(I-\widetilde\alpha_G)^{-2}=
(-f_4\widetilde\alpha_M(I-\widetilde\alpha_M)^{-2}, I).
$$  
Since the group $N$ is topologically isomorphic to a group
of the form (\ref{1e20.35}), this implies that $\kappa^mh=h$ 
for any $h\in H$ and 
some natural $m$, where $m$
  depends, generally speaking, on $h$.
Since $f_2\in \mathrm{Aut}(H)$, all conditions of Lemma \ref{newle1},   
where $Y=H$, 
 $\beta=\widetilde\alpha_G$, and $y_0$ is an arbitrary element of
 $H$, are fulfilled. By Lemma \ref{newle1}, the 
functions $f(h)$ and $g(h)$ satisfy  equations
(\ref{11.04.16}) and (\ref{11.04.8}) and equalities 
(\ref{11.04.14}) and (\ref{11.04.15})
are true for all $h\in H$.    

We note that $\mathrm{Ker}(I+\widetilde\alpha_G)=
\mathrm{Ker}(I+\widetilde\alpha_M)\times\mathbb{Z}_p$ and prove that
if either $f(h)\ne 0$ or $g(h)\ne 0$, then  
$h\in \mathrm{Ker}(I+\widetilde\alpha_G)$. 
Suppose that $f(h_1)\ne 0$ at  an element $h_1=(n_1, b_1)\in H$. 
Consider the subgroup $\langle  n_1 \rangle$ of the group $N$ generated by
the element $n_1$. Put 
$$
h_2=-2\widetilde\alpha_G(I-\widetilde\alpha_G)^{-1}h_1=
(-2\widetilde\alpha_M(I-\widetilde\alpha_M)^{-1} n_1, b_1)=(n_2, b_1),
$$
where $n_2=-2\widetilde\alpha_M(I-\widetilde\alpha_M)^{-1} n_1$. We have
$-f_2\widetilde\alpha_M(I-\widetilde\alpha_M)^{-1}\in\mathrm{Aut}(N)$. 
This implies that $n_2\in \langle n_1\rangle$. Moreover, the elements $n_1$ 
and $n_2$ have the same order  and hence $\langle n_1\rangle=\langle n_2\rangle$. 
It follows from (\ref{11.04.14}) that $g(h_2)\ne 0$. 
Put 
\begin{equation}\label{1nn11.04.15}
u_1=-(I+\widetilde\alpha_G)(I-\widetilde\alpha_G)^{-1} h_1=
(-(I+\widetilde\alpha_M)(I-\widetilde\alpha_M)^{-1}n_1, 0).
\end{equation}
We find from   (\ref{11.04.16}) and (\ref{11.04.14}) that  then $f(u_1)=1$.  
By Lemma \ref{lem2}, the set $\{h\in H: f(h)=1\}$ is a subgroup. 
As far as $f(u_1)=1$, we conclude that
$f(h)=1$ at each element  of the subgroup $\langle u_1\rangle$.
Put
\begin{equation}\label{1nnn11.04.15}
u_2=(I+\widetilde\alpha_G)(I-\widetilde\alpha_G)^{-1} 
h_2=((I+\widetilde\alpha_M)(I-\widetilde\alpha_M)^{-1}n_2, 0).
\end{equation}
Since $g(h_2)\ne 0$, we find from  (\ref{11.04.8}) and  (\ref{11.04.15})   
that  
$g(u_2)=1$. Taking into account that the set $\{h\in H: g(h)=1\}$ 
is a subgroup, 
we see that
$g(h)=1$ at each element  of the subgroup $\langle u_2\rangle$.
As far as the elements $n_1$ and $n_2$ have the same order, 
it follows from (\ref{1nn11.04.15}) and  (\ref{1nnn11.04.15}) that the elements 
$u_1$ and $u_2$ also have the same order. 
Hence
$\langle u_1\rangle=\langle u_2\rangle$. 
Thus,  we proved that $f(h)=g(h)=1$ at each element of the subgroup
$\langle u_1\rangle$. 
It follows from (\ref{16.08.1}) that   if
$f(h)=g(h)=1$ at an element $h\in H$, then $h=0$. Inasmuch as
$f(h)=g(h)=1$ at each element of the subgroup
$\langle u_1\rangle$, we conclude that $\langle u_1\rangle=\{0\}$, i.e., 
$u_1=0$. Taking into account (\ref{1nn11.04.15}) and the fact that 
$(I-\widetilde\alpha_M)^{-1}\in\mathrm{Aut}(N)$, we see that  
 if $f(h_1)\ne 0$ at  an element $h_1=(n_1, b_1)\in H$, then 
 $n_1\in \mathrm{Ker}(I+\widetilde\alpha_M)$. Taking into account that $\widetilde\alpha_G=(\widetilde\alpha_M, -I)$,
 this implies that $h_1\in \mathrm{Ker}(I+\widetilde\alpha_G)$.
 
For the function $g(h)$ we  argue  similarly.
 The final part of the proof repeats verbatim the reasoning in items 4 and 5 
of the proof of Theorem \ref{nth1}.
\end{proof}

\medskip

\noindent\textbf{\Large Acknowledgements}

\medskip

\noindent I  thank the referee  for  a careful reading of the manuscript and
  useful remarks.
  
\medskip

\noindent\textbf{Funding} The author has not disclosed any funding.
 
\medskip

\noindent\textbf{Data Availability Statement} Data sharing 
not applicable to this article as no datasets were
generated or analysed during the current study.

\medskip
 
\noindent\textbf{Declarations}   

\medskip

\noindent\textbf{Conflict of interest} The author states that there is no 
conflict of interest.

\end{document}